\DeclareSymbolFontAlphabet{\amsmathbb}{AMSb}%
\newcommand{\R}{\amsmathbb{R}}
\newcommand{\C}{\amsmathbb{C}}
\newcommand{\cA}{\mathcal{A}}
\newcommand{\cD}{\mathcal{D}} 
\newcommand{\cE}{\mathcal{E}} 
\DeclareMathOperator{\Cov}{\mathsf{Cov}}
\newcommand{\dd}{\,\mathrm{d}}
\newcommand{\cT}{\mathcal{T}}
\newcommand{\cW}{\mathcal{W}}
\newcommand{\inpro}[3][{}]{ \langle #2 , #3 \rangle_{#1} }
\DeclareMathOperator{\E}{\amsmathbb{E}}
\newcommand{\norm}[2][{}]{\| #2 \|_{#1}}
\newcommand{\cS}{\mathcal{S}}
\newcommand{\dom}{\mathrm{dom}}
\newcommand{\N}{\amsmathbb{N}}
\newcommand{\cL}{\mathcal{L}}
\newcommand{\cN}{\mathcal{N}}
\newcommand{\cC}{\mathcal{C}}
\newcommand{\cO}{\mathcal{O}}
\newcommand{\cP}{\mathcal{P}}
\newcommand{\seminorm}[2][{}]{| #2 |_{#1}}
\newcommand{\cF}{\mathcal{F}}
\newcommand{\IP}{\amsmathbb{P}}
\DeclareMathOperator*{\esssup}{ess\,sup}
\newcommand{\Bignorm}[2][{}]{\Big\| #2 \Big\|_{#1}}
\newcommand{\epel}{\stackrel{\epsilon}{=}}
\newcommand{\tr}{\mathrm{tr}}
\newtheorem{lemma}{Lemma}[section]
\newtheorem{proposition}[lemma]{Proposition}
\newtheorem{theorem}[lemma]{Theorem}
\theoremstyle{remark}
\newtheorem{remark}[lemma]{Remark}
\theoremstyle{definition}
\newtheorem{assumption}[lemma]{Assumption}
\newtheorem{example}[lemma]{Example}
\begin{document}
\title[Interpolation of noise in SPDE approximations]{Piecewise linear interpolation of noise in finite element approximations of parabolic SPDEs}
\author[G.~Lord]{Gabriel Lord} \address[Gabriel Lord]{\newline Department of Mathematics, IMAPP, Radboud University.
\newline Postbus 9010, 6500 GL Nijmegen, The Netherlands.} \email[]{gabriel.lord@ru.nl}
	
\author[A.~Petersson]{Andreas Petersson} 
\address[Andreas Petersson]{Department of Mathematics, Faculty of Technology, Linnaeus University. 351 95 V\"axj\"o, Sweden \newline and \newline Department of Mathematics, The Faculty of Mathematics and Natural Sciences, University of Oslo. 
\newline Postboks 1053, Blindern, 0316 Oslo, Norway.} \email[]{andreas.petersson@lnu.se}

\thanks{The work of A. Petersson was supported in part by the Research Council of Norway (RCN) through project no.\ 274410.}	

\subjclass[2020]{60H15, 65M12, 65M60, 60G15, 60G60, 35K51}

\keywords{stochastic partial differential equations, finite element method, Lagrange interpolation, cylindrical Wiener process, Q-Wiener process,  noise discretization, circulant embedding}

\begin{abstract}
Efficient simulation of stochastic partial differential equations (SPDE) on general domains requires noise discretization. This paper employs piecewise linear interpolation of noise in a fully discrete finite element approximation of a semilinear stochastic reaction-advection-diffusion equation on a convex polyhedral domain. The Gaussian noise is white in time, spatially correlated, and modeled as a standard cylindrical Wiener process on a reproducing kernel Hilbert space. This paper provides the first rigorous analysis of the resulting noise discretization error for a general spatial covariance kernel.
The kernel is assumed to be defined on a larger regular domain, allowing for sampling by the circulant embedding method. The error bound under mild kernel assumptions requires non-trivial techniques like Hilbert--Schmidt bounds on products of finite element interpolants, entropy numbers of fractional Sobolev space embeddings and an error bound for interpolants in fractional Sobolev norms. Examples with kernels encountered in applications are illustrated in numerical simulations using the FEniCS finite element software. Key findings include:  noise interpolation does not introduce additional errors for Mat\'ern kernels in $d\ge2$; there exist kernels that yield dominant interpolation errors; and generating noise on a coarser mesh does not always compromise accuracy.
\end{abstract}

\maketitle

\section{Introduction}

Finite element approximation excels at handling complex geometries. We unlock its potential for stochastic partial differential equations (SPDEs) by interpolating the driving noise on a convex polyhedron $\cD \subset \R^d, d \le 3,$ and analyzing the induced error. This applies to stochastic advection-reaction-diffusion equations
\begin{equation*}
	\frac{\partial X}{\partial t}(t,x) -\sum_{i,j = 1}^d \frac{\partial}{\partial x_j} \left(a_{i,j} \frac{\partial X}{\partial x_i}\right)(t,x) + c(x) X(t,x) = F(X(t))(x) + G(X(t)) \frac{\partial W(t,x)}{\partial t}
\end{equation*}
for $(t,x) \in (0,T] \times \cD$ with initial value $X(0,x) := X_0(x), x \in \cD$. The functions $(a_{i,j})_{i,j=1}^d, c$ satisfy an ellipticity condition, and homogeneous Dirichlet or Neumann boundary conditions are considered. The nonlinearities are given by $(G(u)v)(x) := g(u(x),x) \cdot v(x)$ and $F(u)(x) := b(x) \cdot (\nabla u)(x) + f(u(x),x)$ for functions $u,v$ on $\cD$, a vector field $b$ and Lipschitz functions $f,g$ on $\R \times \cD$. The noise $W$ is Gaussian, white in time with spatial covariance kernel $q$. These equations have numerous applications in fields like hydrodynamics, geophysics and chemistry and are treated as It\^o stochastic differential equations in the Hilbert space $H = L^2(\cD)$ of square integrable functions:
\begin{equation}
	\label{eq:spde}
	\begin{split}
		\dd X(t) + A X(t) &= F(X(t)) \dd t + G(X(t)) \dd W(t), \quad t \in (0,T], \\
		X(0) &= X_0,
	\end{split}
\end{equation}
see \cite{DPZ14}. Here $W$ is a standard cylindrical Wiener process on the reproducing kernel Hilbert space (RKHS) $H_q(\cD)$ (equivalently a $Q$-Wiener process for the associated integral operator $Q$, see Remark \ref{rem:Q-Wiener}) and $-A$ generates an analytical semigroup.

We discretize \eqref{eq:spde} using a semi-implicit Euler method combined with piecewise linear finite elements for a triangle mesh $\cT_h$ on $\cD$ with maximal mesh size $h \in (0,1]$. Such approximations are well-studied \cite{ANZ98, DZ02, W05, Y05, KLL10, BL12c, K14, CH19}, but rigorous analyses of \textit{noise discretizations} are rare. This refers to the approximation of
\begin{equation}
	\label{eq:stochastic-increment}
	\cW_k := \inpro[H]{G(X_{h,\Delta t}^j) \Delta W^j}{\varphi^h_k} = \int_{\cD} g(X_{h,\Delta t}^j(x),x) \Delta W^j(x) \varphi^h_k(x) \dd x
\end{equation}
for each time $t_j$ and nodal basis function $\varphi^h_k$. Here $\Delta W^j := W(t_{j+1}) - W(t_j)$ is independent of the approximation $X_{h,\Delta t}^j$ of $X(t_j)$ with constant time step $\Delta t := t_{j+1} - t_j$. Conditioned on $X_{h,\Delta t}^j$, the vector $(\cW_k)_{k = 1}^{N_h}$ (with $N_h$ the number of nodes in $\cT_h$) is therefore Gaussian with
\begin{equation*}
	\Cov(\cW_k,\cW_\ell) = \Delta t  \int_{\cD} \int_{\cD} g(X_{h,\Delta t}^j(x),x) g(X_{h,\Delta t}^j(y),y) \varphi^h_k(x) \varphi^h_\ell(y) q(x,y) \dd x \dd y.
\end{equation*}
Recomputing the covariance matrix and sampling from it at each time $t_j$ by, e.g., Cholesky decompositions becomes costly. To remedy this, $\Delta W^j$ in~\eqref{eq:stochastic-increment} is discretized. In this paper we interpolate $\Delta W^j$ between meshes and derive a strong error bound:
\begin{equation}
	\label{eq:intro-strong-error}
	\sup_{j} \E\left[\norm[H]{X_{h,\Delta t}^j - X(t_j)}^p\right]^{1/p} \le C \big(h^{1+r_1} + \Delta t^{1/2} + (h')^{r_2}\big), \quad \Delta t, h, h' \in (0,1]
\end{equation}
for $p \in [2,\infty)$, rates $r_1, r_2 \ge 0$, mesh sizes $h, h'$ and some $C<\infty$. Here $h^{1+r_1} + \Delta t^{1/2}$ is the standard SPDE approximation error and $(h')^{r_2}$ the noise discretization error.

Literature on noise discretization for parabolic SPDEs mainly focus on truncation of an eigenbasis of $Q$ \cite{BL12c}. Explicit bases are only available for special kernels $q$ and domains $\cD$, like cubes, which partly defeats the purpose of using finite elements.
Numerical approximation of the basis is an alternative, analyzed in~\cite{KLL10} for $F = 0, G = I$. The requirements on $q$ therein are hard to verify, but $q$ has to be piecewise analytic (a strong condition) for the cost of $(\cW_k)_{k = 1}^{N_h}$ to be $\cO(\log(N_h)N_h)$ \cite[Corollary~3.5]{KLL10}. In~\cite{KLL11}, a related approach of expanding the noise on wavelets is used, also for $F = 0, G = I$. Conditions are derived for which a severe truncation of the expansion does not affect the strong error in a semidiscretization. However, constructing the wavelet basis is challenging for general $\cD$ and needs strong assumptions on $q$ (see~\cite[Section~7.2, Theorem~7.1]{KLL11} but note that a Dirichlet condition on $q$ is missing).

In this paper, noise discretization is handled by first assuming $q$ to be positive semidefinite on another polyhedron $\cS \supseteq \cD$, typically a cube. Given $\cT_h$ on $\cD$, we interpolate the noise onto another triangulation $\cT_{h'}$ on $\cS$, then we restrict it to $\cD$ and finally we interpolate it onto $\cT_h$. 
In other words, $\Delta W^j$ in~\eqref{eq:stochastic-increment} is replaced with $I_h R_{\cS \to \cD} I_{h'} \Delta \tilde W^j$, where $I_h$ and $I_{h'}$ are the piecewise linear interpolants with respect to $\cT_h$ and $\cT_{h'}$, $R_{\cS \to \cD}$ is the linear operator restricting functions on $\cS$ to functions on $\cD$ and $\tilde W$ is an extension of $W$ with the same covariance. Figure~\ref{subfig:domains-arrow} illustrates this process for a dodecagon $\cD$ inside a square $\cS$. In terms of the basis functions $\varphi^{h'}_k$ for the $N_{h'}$ nodes $x'_k$ of $\cT_{h'}$,
\begin{equation}
	\label{eq:IhW-expansion}
	I_{h'} \Delta \tilde W^j = \sum_{k = 1}^{N_{h'}} \Delta \tilde W^j(x'_k) \varphi^{h'}_k \text{ with } \Cov(\Delta \tilde W^j(x'_k),\Delta \tilde W^j(x'_\ell)) = \Delta t q(x'_k,x'_\ell).
\end{equation}
We thus only need to sample $N_{h'}$ values from $q$ to approximate~\eqref{eq:stochastic-increment}, avoiding integrals involving $q$. If $q$ is stationary and we choose $\cS$ to allow for a uniform $\cT_{h'}$ the method becomes easy to implement in modern finite element software since fast exact FFT-based algorithms can be used to sample from $q$. These FFT algorithms include the circulant embedding method applied to a Matérn kernel, where the sampling cost of $(\cW_k)_{k = 1}^{N_h}$ is $\cO(\log(N_{h'})N_{h'})$\cite{GKNSS18}. Figure~\ref{subfig:realization} illustrates this with a realization of $X_{h,\Delta t}$ computed with this method with parameters as in Example~\ref{ex:numerical-matern}. If $q$ is non-stationary, one still needs to compute a Cholesky decomposition. In this setting, it should also be possible to combine the results of this paper with a truncation of a numerically computed eigenexpansion of $I_{h'} \Delta \tilde W^j$ for a more efficient algorithm.

\begin{figure}
	\centering
	\subfigure[Noise is sampled (interpolated) onto $\cT_{h'}$ on $\cS \supset \cD$, then restricted to $\cD$ and finally interpolated onto $\cT_{h}$ on $\cD$. \label{subfig:domains-arrow}]{\includegraphics[height=110px]{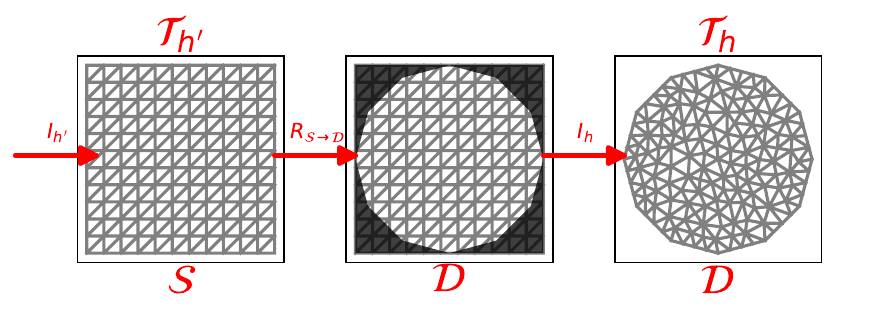}}
	\subfigure[A realization of $X_{h,\Delta t}$ at (left to right) $t_j=0.9, 0.95$ and $1.0$. \label{subfig:realization}]{\includegraphics[height=110px]{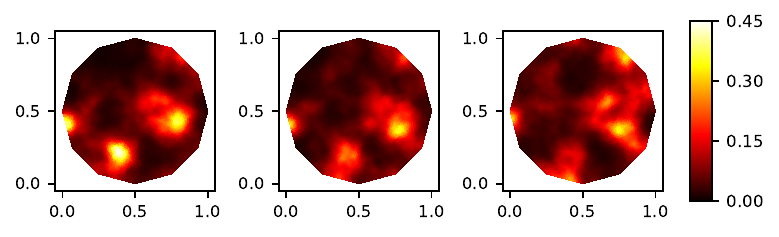}}		
	\caption{Noise discretization idea as in Example~\ref{ex:numerical-matern} for multiplicative noise, with $\cD$ a dodecagon.}
\end{figure}

We derive~\eqref{eq:intro-strong-error} under mild conditions on $q$, Assumption~\ref{assumption:q}. To use the results of~\cite{K14} for the SPDE approximation error bound, we construct $\tilde W$ as a standard cylindrical Wiener process on the RKHS $H_q(\cS)$ and derive new Hilbert--Schmidt bounds on multiplication operators and finite element interpolant products. Non-trivial techniques are needed for the noise discretization error, like entropy numbers of Sobolev space embeddings and generalized error bounds for interpolants. The analysis shows that the noise discretization error is non-dominant for most kernels in $d\ge2$, (including the Mat\'ern class), but dominant for some factorizable kernels in $d\ge 2$ and rough Mat\'ern kernels in $d=1$. Moreover, there are settings where $r_2>1+r_1$, so that we may take $N_{h'} \ll N_h$, increasing computational savings with retained accuracy. 

Interpolation of noise in finite element approximations of \eqref{eq:spde} with $\cS = \cD$ was considered in \cite{BGT15}, but without analyzing how the strong error is affected. Lemma~6.1 in \cite{K14} contains the only such analysis we are aware of. The author considers modified interpolants and spatial semidiscretization only, for $F = 0, G = I, \cD = (0,1), A = -\Delta$ with Dirichlet boundary conditions and $q$ given by a sine expansion. Then the convergence of the strong error is unaffected, but the author notes that
\begin{quote}
	it is only barely analyzed in the literature, how [the replacement of $\Delta W^j$ with $I_h \Delta W^j$] affects the order of convergence. \ldots We leave a rigorous justification in a more general situation
	as a subject to future research. \hfill \cite[p.\ 139]{K14}
\end{quote}
We give this justification in Section~\ref{sec:interpolation} and demonstrate it by numerical simulations in Section~\ref{sec:simulations} for concrete example kernels. Sections~\ref{sec:preliminaries}-\ref{sec:regularity} contain prerequisite material.

\section{Notation and mathematical background}
\label{sec:preliminaries}

This paper employs generic constants $C$. These vary between occurrences and are independent of discretization parameters. We write $a \lesssim b$ for $a,b \in \R$ to signify $a \le C b$ for such a constant $C$.

\subsection{Operator theory and entropy numbers}
\label{sec:operator-theory}

For Banach spaces $U$ and $V$ (all over $\R$ unless otherwise stated), we denote by $\cL(U,V)$ the linear and bounded operators from $U$ to $V$ and set $\cL(U) := \cL(U,U)$. We write $U \hookrightarrow V$ if $U \subset V$ and $\norm[\cL]{I_{U \hookrightarrow V}} := \norm[\cL(U,V)]{I_{U \hookrightarrow V}} < \infty$.

Let $B_U$ and $B_V$ denote the unit spheres in $U$ and $V$, respectively. Given $\Gamma \in \cL(U,V)$, its $n$th entropy number $\epsilon_n(\Gamma)$ is defined by
\begin{equation*}
	\inf\Big\{ \delta \ge 0 : \Gamma(B_U) \subset \cup_{j = 1}^m \{v_j + \delta B_V \} \text{ for some } v_1, \ldots, v_m \in V \text{ and } m \le 2^{n-1} \Big\}.
\end{equation*}
The decay of the entropy numbers is a measure of the compactness of $\Gamma$.
Let us write $\norm[\cE^\infty(U,V)]{\Gamma} := \norm[\cL(U,V)]{\Gamma}$ and $\norm[\cE^p(U,V)]{\Gamma} :=  \norm[\ell^p]{(\epsilon_j(\Gamma))_{j=1}^\infty}$, where $\ell^p$, $p \in (1,\infty)$, is the space of $p$-summable sequences.
Then a H\"older-type property holds for a further Banach space $E$ and operator $\Lambda \in \cL(V,E)$:
\begin{equation}
	\label{eq:entropy-holder}
	\norm[\cE^{p}(U,E)]{\Lambda \Gamma} \le 2^{1/p} \norm[\cE^{p_1}(V,E)]{\Lambda} \norm[\cE^{p_2}(U,V)]{\Gamma}, \hspace{0.5em} p_1,p_2 \in (1,\infty], 1/{p} = 1/{p_1} + 1/{p_2}.
\end{equation}
This is essentially \cite[Theorem~14.3.3]{P80}, the constant in the bound can be found by following the proof of \cite[Theorem~14.2.3]{P80} (see also \cite[Theorems~12.1.4-5]{P80}). 

For separable Hilbert spaces $U$ and $H$, $\cL_2(U,H)$ is the space of Hilbert--Schmidt operators. It is an operator ideal and a separable Hilbert space with inner product 
\begin{equation*}
	\inpro[\cL_2(U,H)]{\Gamma_1}{\Gamma_2} := \sum_{j=1}^\infty \inpro[H]{\Gamma_1 e_j}{\Gamma_2 e_j}, \quad \Gamma_1, \Gamma_2 \in \cL_2(U,H), 
\end{equation*}
where $(e_j)_{j=1}^\infty$ is an arbitrary orthonormal basis (ONB) of $U$. We use the decay of entropy numbers to bound the norm induced by this inner product. Specifically, for a given operator $\Gamma \in \cL(U,H)$ it holds that 
\begin{equation}
	\label{eq:HS-entropy}
	\norm[\cL_2(U,H)]{\Gamma} \le 2 \norm[\cE^2(U,H)]{\Gamma}.
\end{equation}
This follows from the fact that the $n$th singular value of $\Gamma$ can be bounded from above by $2 \epsilon_n(\Gamma)$ for all $n \in \N$. This is \cite[Theorem~12.3.1]{P80} if one keeps in mind that all so called $s$-numbers on Hilbert spaces coincide (which in turn is \cite[Theorem~11.3.4]{P80}). Since $\norm[\cL_2(U,H)]{\Gamma}$ is the square root of the sum of squared singular values of $\Gamma$ (see \cite[Theorem~15.5.5]{P80}), we obtain~\eqref{eq:HS-entropy}. For more on entropy numbers, we refer to \cite{P80}.

For $\Gamma \in \cL(U,H)$, the range $\Gamma(U)$ is a Hilbert space with $\inpro[\Gamma(U)]{\cdot}{\cdot} := \inpro[U]{\Gamma^{-1}\cdot}{\Gamma^{-1}\cdot}$, where $\Gamma^{-1} := (\Gamma|_{\ker(\Gamma)^\perp})^{-1} \colon \Gamma(U) \to \ker(\Gamma)^\perp \subset U$ is the pseudoinverse and $\ker(\Gamma)^\perp$ the orthogonal complement of the kernel of $\Gamma$. The norm on $\Gamma(U)$ may also be represented by $\norm[\Gamma(U)]{u} := \min_{v \in U, \Gamma v = u} \norm[U]{v}$, 
see \cite[Remark~C.0.2]{PR07}.
Note that $\Gamma^{-1} \Gamma$ coincides with the orthogonal projection $P_{\ker(\Gamma)^\perp}$, whereas $\Gamma \Gamma^{-1}= I$. We write $\Sigma^+(H) \subset \cL(H)$ for the subspace of positive semidefinite operators on $H$ and refer to \cite[Appendix~C]{PR07} for more details on pseudoinverses.

\subsection{Reproducing kernel Hilbert spaces}
\label{sec:rkhs}

Let $q \colon \cD \times \cD \to \R$ be a positive semidefinite kernel on a non-empty connected subset $\cD \subset \R^d$, $d \in \N$. The RKHS $H_q(\cD)$ of $q$ on $\cD$ is the Hilbert space of functions on $\cD$ such that 
\begin{enumerate}[label=(\roman*)]
	\item 
	$q(x,\cdot) \in H_q(\cD)$ for all $x \in \cD$ and
	\item for all $f \in H_q(\cD), x \in \cD$, $f(x)=\inpro[H_q(\cD)]{f}{q(x,\cdot)}$ (the reproducing property).
\end{enumerate}
Each $q$ has a unique RKHS $(H_q(\cD),\inpro[H_q(\cD)]{\cdot}{\cdot})$, but $H_q(\cD)$ may have multiple equivalent norms for different kernels. A Hilbert space $H$ of functions on $\cD$ is an RKHS if and only if the evaluation functional $\delta_x \colon H \to \R$, $\delta_x f := f(x)$, is bounded for all $x \in \cD$ \cite[Theorem~10.2]{W04}. We write $H_q :=H_q(\cD)$ if $\cD$ need not be emphasized. Suppose that $q$ is positive semidefinite also on $\cS \supset \cD$ and write $R_{\cS \to \cD}$ for the linear operator that restricts a function $f$ on $\cS$ to a function on $\cD$. That is to say,
\begin{equation}
	\label{eq:restriction-def}
	(R_{\cS \to \cD} f)(x) = f(x), \quad \text{for all } x \in \cD.
\end{equation}
If $f \in H_q(\cS)$, then $R_{\cS \to \cD}f \in H_q(\cD)$ and $H_q(\cD) = R_{\cS \to \cD}(H_q(\cS))$ with equal norms \cite[Theorem~6]{BT04}. Thus, the pseudoinverse $R^{-1}_{\cS \to \cD}$ is an extension operator with $\norm[\cL(H_q(\cD),H_q(\cS))]{R^{-1}_{\cS \to \cD}} = 1$.

For continuous and bounded $q$, $H_q$ is separable \cite[Corollary~4]{BT04}. This yields
\begin{equation}
	\label{eq:kernel-decomp}
	q(x,\cdot) = \sum^\infty_{j=1} e_j(x)e_j(\cdot), \quad q(x,y) = \sum^\infty_{j=1} e_j(x)e_j(y), \quad x,y \in \cD,
\end{equation}
for any ONB $(e_j)_{j=1}^\infty$ with convergence in $H_q$ respectively $\R$.  When $\cD = \R^d$ and $q$ is of the form $q(x,y) = q(x-y)$, it is called stationary. If it also is continuous, bounded and integrable, it has a positive and integrable Fourier transform  $\hat{q} \colon \R^d \to \R^+$ (the spectral density) \cite[Chapter~6]{W04}. We refer to~\cite{BT04,W04} for more details on RKHSs.

\subsection{Fractional Sobolev spaces}
\label{sec:fractional-sobolev-spaces}

Let $\cD \subset \R^d$, $d=1,2,3$ be a bounded Lipschitz domain. We let $H := L^2(\cD)$ with inner product $\inpro{\cdot}{\cdot}$ and norm $\norm{\cdot}$. We let $W^{r,p}(\cD)$, $r \in [0,\infty), p \in (1,\infty),$ denote the Sobolev space equipped with the norm
\begin{equation}
	\label{eq:sobolev-slobodeckij}
	\norm[W^{r,p}(\cD)]{u} := \Big(\norm[W^{m,p}(\cD)]{u}^p + \sum_{|\alpha| = m} \int_{\cD \times \cD} \frac{|D^\alpha u(x) - D^\alpha u(y)|^p}{|x-y|^{d+p\sigma}}\dd x \dd y\Big)^{1/p},
\end{equation}
for non-integer $r = m + \sigma$, $m \in \N_0, \sigma \in (0,1)$, where $\norm[W^{m,p}(\cD)]{\cdot}$ is the integer order Sobolev norm and $|\cdot|$ the Euclidean norm on $\R^d$, see also~\cite[Definition~1.3.2.1]{G85}. The space $H^r(\cD) := W^{r,2}(\cD)$ is a Hilbert space. We write $H^r:=H^r(\cD)$ and $W^{r,p}:=W^{r,p}(\cD)$ when $\cD$ is not emphasized. The properties here and below, typically given for Sobolev spaces over $\C$, readily transfer to our setting in $\R$, cf.\ \cite[Section~2.2]{KLP22}. 

For $rp > d$, the elements of $W^{r,p}$ have continuous representatives \cite[Theorem~1.4.4.1]{G85}. From the definitions \eqref{eq:sobolev-slobodeckij} and \eqref{eq:restriction-def}, it follows directly that $R_{\cS \to \cD} \in \cL(W^{r,p}(\cS),W^{r,p}(\cD))$, a fact that we will use without reference from here on. For $p=2, r > d/2$, $\delta_x$ is bounded on $H^r$, making it a RKHS. The stationary kernel $q_r$ of the Bessel potential space $H^r(\R^d)$ is given by the continuous and bounded function $q_r(x,y) := q_r(|x-y|) := 2^{1-r} |x-y|^{r-d/2} K_{r-d/2}(|x-y|) /\Gamma(s)$ for $x, y \in \R^d$, where $K_{r-d/2}$ is the modified Bessel function of the second kind with smoothness parameter $r - d/2 > 0$ \cite[p.\ 133]{W04}. Since $\cD$ is Lipschitz, $H^r(\cD) = H_{q_r}(\cD)$ with equivalent norms, see \cite[p.\ 25]{G85} and \cite[Corollary~10.48]{W04}. The expressions (17) and (18) in \cite[Section~2.7]{S99} yield for all $r \in (d/2,d/2+1)$,
\begin{equation}
	\label{eq:matern-kernel-holder}
	q_r(x,x) - 2q_r(x,y) + q_r(y,y) = 2( q_r(0) - q_r(|x-y|)) \le C |x-y|^{2r - d}, \quad x, y \in \bar{\cD},
\end{equation}
for some constant $C < \infty$. When $r = d/2+1$, the exponent is replaced by $2r - d - \epsilon$ for arbitrary $\epsilon > 0$. When $r > d/2 +1$, it is replaced by $2$.

The space $W^{s,p_1}$ is compactly embedded into $W^{r,p_2}$ provided that $s - r > \max(d/p_1-d/p_2,0)$. The authors of \cite{ET96} quantify the decay of the entropy numbers of this embedding. Since the Sobolev space $W^{s,p}$ and Besov space $B^s_{p,p}$ coincide for non-integer $s$ (see~\cite[Section~2]{BH21}), it follows from \cite[Theorem~3.5]{ET96} that
\begin{equation}
	\label{eq:sobolev-entropy-decay}
	\norm[\cE^\rho]{I_{W^{s,p_1}(\cD) \hookrightarrow  W^{r,p_2}(\cD)}} := \norm[\cE^\rho(W^{s,p_1}(\cD),W^{r,p_2}(\cD))]{I_{W^{s,p_1}(\cD) \hookrightarrow  W^{r,p_2}(\cD)}} < \infty
\end{equation}
for Lipschitz domains $\cD$, $s, r \in \R^+\setminus\N$, $s - r > \max(d/p_1-d/p_2,0)$ and $\rho > d/(s-r)$. We use \eqref{eq:sobolev-entropy-decay} in conjunction with \eqref{eq:entropy-holder} and \eqref{eq:HS-entropy} to obtain bounds on certain Hilbert--Schmidt norms in the error analysis of Section~\ref{sec:interpolation}.

\subsection{Fractional powers of elliptic operators}

From here on, we let $\cD$ be convex and define $A$ as the part in $H$ of the operator $A \colon V \mapsto V^*$ defined by ${}_{V^*} \langle A u, v \rangle_V := a(u,v)$ for a Gelfand triple $V \hookrightarrow H \hookrightarrow V^*$ and bilinear form $a \colon V \times V \to \R$. We consider either Neumann-type homogeneous zero boundary conditions ($V := H^1$) or Dirichlet-type ($V := H^1_0(\cD) = \{v \in H^1 : \tr (v) = 0 \}$ with $\tr$ the trace operator), with
\begin{equation*}
	a(u,v) := \sum_{i,j = 1}^d \int_\cD a_{i,j} D^i u D^j v \dd x + \int_\cD c u v \dd x, \, u, v \in V.
\end{equation*}
The derivative $D^{j}$ is with respect to $x_j$, $a_{i,j} \in \cC^1(\bar{\cD})$ fulfill $a_{i,j} = a_{j,i}$ and for some $\lambda_0 > 0$ and all $y \in \R^d, x \in \cD$, $\sum^{d}_{i,j=1} a_{i,j} (x) y_i y_j \ge \lambda_0 |y|^2$. The function $c \in L^\infty(\cD)$ is non-negative a.e.\ on $\cD$. Without loss of generality, let $c \ge c_0$ for some $c_0 > 0$, a.e.\ on $\cD$, in the Neumann case. Then $a$ is continuous, symmetric and coercive, and $A$ densely defined, closed and positive definite. 

Since $V \hookrightarrow H$ is compact, $A$ has a compact inverse, enabling fractional powers by spectral decomposition. We write $\dot{H}^r := \dom(A^{r/2})$ for $r \ge 0$ and let, for $r < 0$, $\dot{H}^r$ be the completion of $H$ under $\norm{A^{r/2} \cdot}$. Since $\cD$ is convex, the functions of $\dot{H}^2$ are the elements of $H^2$ that satisfy the boundary conditions of $A$ while
\begin{equation}
	\label{eq:sobolev_id_theta_def}
	\dot{H}^r = H^r, \, r \in [0,\theta) \text{ with } \theta := \begin{cases}
		1/2 & \text{ for Dirichlet boundary conditions and} \\
		3/2 & \text{ for Neumann boundary conditions,}
	\end{cases}
\end{equation} 
with norm equivalence \cite[Theorems~16.9, 16.13]{Y10}. Moreover, $\dot{H}^r \hookrightarrow H^r, r \in [0,2],$ by complex interpolation \cite[Theorem~3.2, Remark~3.3]{BH21}. We refer to \cite[Appendix~B]{K14} and \cite[Chapter~2]{Y10} for more on this setting.

\section{Parabolic SPDEs driven by cylindrical Wiener processes in RKHSs}
\label{sec:regularity}

Here, mild solutions to~\eqref{eq:spde} and cylindrical Wiener processes $W$ in $H_q(\cD)$ are introduced. We extend $W$ to $\cS$, state our assumptions on $q$ and derive regularity in the setting of stochastic reaction-diffusion-advection equations.

\subsection{Mild solutions of SPDEs}

For $T < \infty$, let $(\Omega,\cF,(\cF_t)_{t \in [0,T]},\IP)$ be a filtered probability space under the usual conditions. The process on $[0,T]$ given by 
\begin{equation}
	\label{eq:mild-solution}
	X(t) := {S}(t) X(0) + \int_{0}^{t} {S}(t-s) F (X(s)) \dd s + \int_0^t {S}(t-s) G(X(s)) \dd W(s),
\end{equation}
is called mild if also $X \in \cC([0,T],L^p(\Omega,H))$, $p \ge 2$, where $S = (S(t))_{t \ge 0}$ is the analytic semigroup generated by $-A$. The stochastic integral is of It\^o type \cite[Chapter~4]{DPZ14} and $W$ a standard cylindrical Wiener process in $H_q(\cD)$ (see Section~\ref{sec:cyl-wiener-rkhs} below). In~\cite{K14}, the existence of a unique solution~\eqref{eq:mild-solution} is derived with the regularity estimate $\sup_{t \in [0,T]} \norm[L^p(\Omega,\dot{H}^{r+1})]{X(t)} < \infty.$ This is shown in \cite[Theorem~2.27]{K14} under the following assumption.
\begin{assumption}
	\label{assumption:regularity}
	For some $r \in [0,1)$, $p \ge 2$, there is a $C < \infty$ such that:
	\begin{enumerate}[label=(\alph*)]
		\item \label{assumption:regularity:X0} The initial value $X_0$ is $\cF_0$-measurable and $\norm[L^p(\Omega,\dot{H}^{r+1})]{X_0} \le C$.
		\item \label{assumption:regularity:F} The non-linear reaction-advection term $F \colon H \to \dot{H}^{-1}$ satisfies
		\begin{enumerate}[label=(\roman*)]
			\item $\norm[\dot{H}^{-1}]{F(u) - F(v)} \le C  \norm[H]{u - v}$ for all $u, v \in H$ 
			\item $\norm[\dot{H}^{r-1}]{F(u) - F(v)} \le C  \norm[\dot{H}^{r}]{u - v}$ for all $u, v \in \dot{H}^r$.
		\end{enumerate}		
		\item \label{assumption:regularity:G} The non-linear multiplicative noise operator $G \colon H \to \cL_2(H_q,H)$ satisfies
		\begin{enumerate}[label=(\roman*)]
			\item $\norm[\cL_2(H_q,\dot{H}^r)]{G(u)} \le C ( 1 + \norm[\dot{H}^r]{u})$ for all $u \in \dot{H}^r$ and
			\item $\norm[\cL_2(H_q,H)]{G(u)-G(v)} \le C \norm{u - v}$ for all $u, v \in H$.
		\end{enumerate}
	\end{enumerate}
\end{assumption}

Assumption~\ref{assumption:regularity}\ref{assumption:regularity:X0} depends on Sobolev regularity of $X_0$ and whether it satisfies the boundary conditions associated with $A$. Assumption~\ref{assumption:regularity}\ref{assumption:regularity:F} is with
\begin{equation}
	\label{eq:F1-F2-def}
	F(u) := F_1(u)+ F_2(u) := b(\cdot) \cdot \nabla u + f(u(\cdot),\cdot)
\end{equation}
satisfied for regular $b = (b_1,\ldots,b_d) \colon \cD \to \R^d$, $f \colon \R \times \cD \to \R$ and functions $u$ on $\cD$. The next proposition gives sufficient conditions on $f$ and $b$ for this to hold true.

\begin{proposition}
	\label{prop:F-reg}
	Let $F$ be given by~\eqref{eq:F1-F2-def}, where for some constant $C < \infty$,
	\begin{enumerate}[label=(\roman*)]
		\item $\int_{\cD} |f(0,x)|^2 \dd x \le C$
		\item $\esssup_{y \in \cD} |f(x_1,y) - f(x_2,y)| \le C |x_1-x_2|$, $x_1, x_2 \in \R$, and
		\item $\esssup_{x \in \cD}  \sum_{j=1}^2 \left(|b_j(x)| + \left|\frac{\partial b_j}{\partial x_j}(x) \right| \right) \le C$.
	\end{enumerate}
	Then $F$ extends to an operator on $H$ and fulfills Assumption~\ref{assumption:regularity}\ref{assumption:regularity:F} for all $r \in [0,1]$
	\begin{enumerate}[label=(\roman*)]
		\item under Dirichlet boundary conditions on $A$ and
		\item under Neumann boundary conditions if also $b(x) = 0$ for a.e.\ $x \in \partial \cD$.
	\end{enumerate}
\end{proposition}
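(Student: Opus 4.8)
**The plan is to verify each sub-condition of Assumption~\ref{assumption:regularity}\ref{assumption:regularity:F} by splitting $F = F_1 + F_2$ and handling the advection term $F_1(u) = b \cdot \nabla u$ and the reaction term $F_2(u) = f(u(\cdot),\cdot)$ separately.** For the reaction term, condition~(i) ($\|F_2(u)-F_2(v)\|_{\dot H^{-1}} \le \|F_2(u)-F_2(v)\|_H \lesssim \|u-v\|_H$) follows immediately from the embedding $H \hookrightarrow \dot H^{-1}$, the Lipschitz hypothesis~(ii) on $f$, and the growth bound coming from~(i) (which gives $F_2(u) \in H$ whenever $u \in H$, so $F_2$ extends to $H$). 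For the second inequality of the assumption, one needs $\|F_2(u)-F_2(v)\|_{\dot H^{r-1}} \lesssim \|u-v\|_{\dot H^r}$ for $r\in[0,1]$; since $\dot H^{r-1}$ for $r \le 1$ is a space of negative (or zero) order, it suffices to use $H \hookrightarrow \dot H^{r-1}$ (valid as $r-1 \le 0$) together with the Lipschitz bound and $\dot H^r \hookrightarrow H$.

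For the advection term $F_1(u) = b\cdot\nabla u = \sum_j b_j D^j u$, the key point is that for $u \in H = L^2(\cD)$ the gradient only exists distributionally, so $F_1(u)$ must be interpreted as an element of $\dot H^{-1}$ via duality. I would define, for $u \in H$ and $v \in \dot H^1$,
\begin{equation*}
	{}_{\dot H^{-1}}\langle F_1(u), v\rangle_{\dot H^1} := -\sum_{j=1}^d \int_\cD u\, D^j(b_j v)\dd x = -\sum_{j=1}^d \int_\cD u\,\big( (D^j b_j) v + b_j D^j v\big)\dd x,
\end{equation*}
which is well-defined and bounded by $C\|u\|_H \|v\|_{\dot H^1}$ using hypothesis~(iii) (boundedness of $b_j$ and $\partial b_j/\partial x_j$) and $\dot H^1 \hookrightarrow H$. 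This requires the boundary term in the integration by parts to vanish: under Dirichlet conditions $v \in H^1_0$ kills it, while under Neumann conditions the extra assumption $b|_{\partial\cD}=0$ is exactly what is needed since $v$ need not vanish on the boundary. Linearity in $u$ then gives condition~(i) for $F_1$. For the second inequality, when $u \in \dot H^r$ with $r \in (0,1]$ one can instead pair with $v\in \dot H^{1-r}$, move fewer derivatives, and estimate $\|F_1(u)\|_{\dot H^{r-1}} \lesssim \|\nabla u\|_{\dot H^{r-1}} \lesssim \|u\|_{\dot H^r}$, using that multiplication by the regular functions $b_j$ is bounded on the relevant fractional spaces and that $D^j : \dot H^r \to \dot H^{r-1}$ is bounded; the case $r=0$ is the duality estimate just given, and intermediate $r$ follow by the same duality pairing combined with interpolation or a direct fractional-norm estimate.

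**The main obstacle I expect is the advection term at fractional order**, i.e.\ establishing $\|F_1(u)\|_{\dot H^{r-1}} \lesssim \|u\|_{\dot H^r}$ for $r \in (0,1)$ in a way that respects the boundary conditions encoded in the dotted spaces. The subtlety is that $\dot H^{r-1}$ and $\dot H^{1-r}$ are defined through the spectral calculus of $A$, not intrinsically, so one cannot freely integrate by parts without tracking whether test functions satisfy boundary conditions; for $1-r \in (0,1/2)$ (Dirichlet) or $(0,3/2)$ (Neumann) the identification $\dot H^{1-r} = H^{1-r}$ from~\eqref{eq:sobolev_id_theta_def} rescues this, letting us work with intrinsic Sobolev--Slobodeckij norms and standard results on the boundedness of multiplication by $\cC^1$ functions and of differentiation between fractional Sobolev spaces. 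One also has to confirm the mapping property $F \colon H \to \dot H^{r-1}$ claimed in the assumption statement and that $F_2(u) \in \dot H^r$ makes sense — but since the assumption as used in~\cite{K14} only requires the two Lipschitz-type bounds, the cleanest route is to prove those two inequalities directly and note that taking $v=u$ or $v=0$ as reference points yields the asserted mapping properties and the extension of $F$ to $H$.
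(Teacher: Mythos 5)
Your proposal is correct and takes essentially the same route as the paper, which disposes of $F_2$ by the Lipschitz-plus-embedding argument (citing \cite{JR12}) and of $F_1$ by the duality/integration-by-parts argument in which the boundary term is killed either by the Dirichlet condition on the test function or by $b|_{\partial\cD}=0$ in the Neumann case (citing \cite[Example~2.22]{K14} and Green's formula). You merely spell out the details the paper delegates to references, and your suggestion to obtain the intermediate fractional orders for the linear operator $F_1$ by interpolating between the endpoint bounds $\dot{H}^0\to\dot{H}^{-1}$ and $\dot{H}^1\to\dot{H}^0$ is the standard and correct way to close that step.
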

This result is derived for $F = F_2$ in \cite[p.\ 120]{JR12}, recalling that $\dot{H}^r \hookrightarrow H$ for $r \ge 0$. The term $F_1$ is treated in \cite[Example~2.22]{K14} in the Dirichlet case. The Neumann case follows analogously to this from Green's formula, since $b$ vanishes on $\partial \cD$.

For which $r$ Assumption~\ref{assumption:regularity}\ref{assumption:regularity:G} is satisfied when $G$ is a multiplication operator
\begin{equation}
	\label{eq:G-def}
	(G(u)v)(x) := g(u(x),x)v(x), \quad u \in H, v \in H_q, x \in \cD,
\end{equation}
depends largely on the covariance kernel $q$. In the next section, we specify our assumptions on $q$ and formally introduce the mathematical model of the noise $W$.

\subsection{Cylindrical Wiener processes in RKHSs}
\label{sec:cyl-wiener-rkhs}

A family $W$, indexed by $t \in [0,T]$, of linear maps from a Hilbert space $U$ to the space of real-valued random variables on $(\Omega,\cF,(\cF_t)_{t \in [0,T]},\IP)$ is called a cylindrical stochastic process in $U$ adapted to $(\cF_t)_{t \in [0,T]}$ if $W(t) u$ is $\cF_t$-measurable for all $t \ge 0$ and all $u \in U$. Such a process $W$ is a (strongly) cylindrical Wiener process with covariance operator $K \in \Sigma^+(U)$ if $t \mapsto (W(t)u_1,\ldots,W(t)u_n)$ is a Wiener process in $\R^n$ for $u_1, \ldots, u_n \in U, n \in \N,$ and $\E[(W(t)u)(W(t)v)] = t \inpro[U]{Ku}{v}$ for all $t \in [0,T], u,v \in U$. It is called standard if $K = I$. For such $W$ in $U$, another Hilbert space $\tilde U$ and $\Gamma \in \cL(U,\tilde U)$ the process $\Gamma W$ defined by $t \mapsto W(t)(\Gamma^* v)$, $v \in \tilde U$, is a cylindrical Wiener process in $\tilde U$ with covariance $\Gamma K \Gamma^* \in \Sigma^+(\tilde U)$. Here $\Gamma^* \in \cL(\tilde U,U)$ is the adjoint of $\Gamma$. These claims, along with the fact that $W$ exists for a given $U$ and $K$ are shown in \cite{R11}. We have followed the setting of this paper and \cite[Section 4.1.2]{DPZ14}, identifying Hilbert spaces with their duals.

We now specify $W$ to be a standard ($K=I$) cylindrical Wiener process in $H_q(\cD)$. With $W(t,x) := \delta_x W(t)$, one obtains by the reproducing property of $H_q$ a Gaussian random field with covariance $\E[W(t,x)W(s,y)] = \min(t,s)q(x,y), s,t \in [0,T], x,y \in \cD$. This yields that the interpolation \eqref{eq:IhW-expansion}, used for simulation of $W$ by sampling on $\cS \supseteq \cD$, is well-defined. However, we also need to confirm that an extension $\tilde W$ of $W$ to a standard cylindrical Wiener process in $H_q(\cS)$ exists, with $W = R_{\cS \to \cD} \tilde W$.

\begin{lemma}
	\label{lem:extension-of-W}
	If $q$ is positive semidefinite on $\cS$ there is a standard cylindrical Wiener process $\tilde W$ on $H_q(\cS)$ such that $(R_{\cS \to \cD} \tilde W) u  = W u$ in $L^2(\Omega,\R)$ for all $u \in U$. 
\end{lemma}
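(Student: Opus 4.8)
The plan is to build $\tilde W$ directly from $W$ using the extension operator $R^{-1}_{\cS\to\cD}$ introduced before the lemma. Recall from Section~\ref{sec:rkhs} that, since $q$ is positive semidefinite on $\cS \supseteq \cD$, we have $H_q(\cD) = R_{\cS\to\cD}(H_q(\cS))$ with equal norms, and the pseudoinverse $R^{-1}_{\cS\to\cD} \colon H_q(\cD) \to H_q(\cS)$ is an isometric extension operator with range $\ker(R_{\cS\to\cD})^\perp \subset H_q(\cS)$. The natural candidate is to define $\tilde W$ on $H_q(\cS)$ by combining the action of $W$ on the ``$\cD$-part'' of a test function with an independent standard cylindrical Wiener process on the complementary subspace $K := \ker(R_{\cS\to\cD}) \subset H_q(\cS)$. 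Concretely, writing $P_K$ and $P_{K^\perp}$ for the orthogonal projections in $H_q(\cS)$, and noting that $R^{-1}_{\cS\to\cD} R_{\cS\to\cD} = P_{K^\perp}$ while $R_{\cS\to\cD} R^{-1}_{\cS\to\cD} = I_{H_q(\cD)}$, one sets
\begin{equation*}
	\tilde W(t)v := W(t)\big(R_{\cS\to\cD} v\big) + W^\perp(t)\big(P_K v\big), \quad v \in H_q(\cS),
\end{equation*}
where $W^\perp$ is a standard cylindrical Wiener process on the Hilbert space $K$, constructed on a (possibly enlarged) probability space independently of $W$ and adapted to the filtration; such a process exists by the standard construction (e.g.\ via an i.i.d.\ sequence of Brownian motions along an ONB of $K$, as in \cite[Section~4.1.2]{DPZ14}).

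Next I would verify the three required properties. \emph{Linearity and finite-dimensional Gaussianity:} $\tilde W(t)$ is linear in $v$ since both $R_{\cS\to\cD}$ and $P_K$ are linear, and for any finite collection $v_1,\dots,v_n \in H_q(\cS)$ the vector $(\tilde W(t)v_1,\dots,\tilde W(t)v_n)$ is a sum of two independent Gaussian vectors (one measurable with respect to $W$, one with respect to $W^\perp$), hence Gaussian with independent increments and the correct continuity in $t$, so it is a Wiener process in $\R^n$. \emph{Covariance:} using independence of $W$ and $W^\perp$, together with $\E[(W(t)u_1)(W(t)u_2)] = t\inpro[H_q(\cD)]{u_1}{u_2}$ and $\E[(W^\perp(t)w_1)(W^\perp(t)w_2)] = t\inpro[K]{w_1}{w_2}$,
\begin{equation*}
	\E[\tilde W(t)v_1\,\tilde W(t)v_2] = t\big(\inpro[H_q(\cD)]{R_{\cS\to\cD}v_1}{R_{\cS\to\cD}v_2} + \inpro[H_q(\cS)]{P_Kv_1}{P_Kv_2}\big).
\end{equation*}
Since $R_{\cS\to\cD}$ is a coisometry onto $H_q(\cD)$ with initial space $K^\perp$, the first inner product equals $\inpro[H_q(\cS)]{P_{K^\perp}v_1}{P_{K^\perp}v_2}$, and adding the $P_K$ term gives $t\inpro[H_q(\cS)]{v_1}{v_2}$ by orthogonality. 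Thus $\tilde W$ has covariance operator $I$ on $H_q(\cS)$, i.e.\ it is standard. \emph{Restriction:} for $u \in H_q(\cD)$, apply the displayed definition to $v = R^{-1}_{\cS\to\cD}u$; then $R_{\cS\to\cD}v = R_{\cS\to\cD}R^{-1}_{\cS\to\cD}u = u$ while $P_K v = P_K R^{-1}_{\cS\to\cD}u = 0$ because $R^{-1}_{\cS\to\cD}$ maps into $K^\perp$. Hence $(R_{\cS\to\cD}\tilde W)(t)u = \tilde W(t)(R^*_{\cS\to\cD}u)$; one checks that the adjoint of $R_{\cS\to\cD} \in \cL(H_q(\cS),H_q(\cD))$ is exactly $R^{-1}_{\cS\to\cD}$ (a coisometry's adjoint is its isometric pseudoinverse), so $R_{\cS\to\cD}\tilde W = W$ as required.

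The main obstacle is being careful about the identification of the adjoint $R^*_{\cS\to\cD}$ with the pseudoinverse $R^{-1}_{\cS\to\cD}$, and correspondingly that $R_{\cS\to\cD}$ is genuinely a partial isometry (coisometry) with initial space $\ker(R_{\cS\to\cD})^\perp$ — this is what makes the covariance computation collapse to the identity, and it rests on the ``equal norms'' statement from \cite[Theorem~6]{BT04} quoted in Section~\ref{sec:rkhs}. A secondary, purely technical point is ensuring measurability/adaptedness of $\tilde W$ with respect to a common filtration on the product probability space; this is routine and handled exactly as for the standard cylindrical Wiener process construction in \cite{DPZ14,R11}. If $\ker(R_{\cS\to\cD}) = \{0\}$ — which happens precisely when $H_q(\cS)$ and $H_q(\cD)$ are isometrically isomorphic via restriction — the correction term $W^\perp$ is unnecessary and $\tilde W = R^{-1}_{\cS\to\cD}W$ works directly; the construction above handles the general case uniformly.
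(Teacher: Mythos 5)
Your construction is the same as the paper's: you take $\tilde W = R^{-1}_{\cS\to\cD}W + (\text{independent standard noise on }\ker(R_{\cS\to\cD}))$, which in the paper's notation for pushforwards of cylindrical processes is exactly $\tilde W(t)v = W(t)(R_{\cS\to\cD}v) + \hat W(t)v$ with $\hat W$ having covariance $P_{\ker(R_{\cS\to\cD})}$. Where you differ is in how the key covariance identity is verified. The paper proves $\inpro[H_q(\cS)]{R^{-1}_{\cS\to\cD}(R^{-1}_{\cS\to\cD})^*u}{v} = \inpro[H_q(\cS)]{u}{P_{\ker(R_{\cS\to\cD})^\perp}v}$ by hand: it splits $v$ into its components in $\ker(R_{\cS\to\cD})$ and its orthogonal complement, shows $q(x,\cdot) = R^{-1}_{\cS\to\cD}q(x,\cdot)|_{\cD}$ for $x\in\cD$ via the reproducing property, and passes to general $u$ by density of the span of the kernel sections. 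You instead observe that the ``equal norms'' statement of \cite[Theorem~6]{BT04} makes $R_{\cS\to\cD}$ a coisometry with initial space $\ker(R_{\cS\to\cD})^\perp$, so that $R^*_{\cS\to\cD}=R^{-1}_{\cS\to\cD}$ and the covariance collapses by Pythagoras. This is a legitimate shortcut and arguably cleaner; its cost is that the whole argument now hinges on the isometry content of the cited theorem (the minimal-norm extension realizes the $H_q(\cD)$-norm exactly, so $R^{-1}_{\cS\to\cD}$ is an isometry onto $\ker(R_{\cS\to\cD})^\perp$, not merely norm-one), which you correctly flag as the main obstacle. The paper's longer computation essentially re-derives this fact from the reproducing property, making the proof more self-contained. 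Both arguments are correct; the restriction identity $R_{\cS\to\cD}\tilde W = W$ is handled identically in the two proofs.
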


\begin{proof}
	Recall that $\ker(R_{\cS \to \cD})$ is a Hilbert subspace of  $H_q(\cS) = \ker(R_{\cS \to \cD}) \oplus \ker(R_{\cS \to \cD})^\perp$. Let $\hat{W}$ be a cylindrical Wiener process on $H_q(\cS)$, independent of $W$, with covariance operator $P_{\ker(R_{\cS \to \cD})}$, the orthogonal projection onto $\ker(R_{\cS \to \cD})$. Set $\tilde W := R_{\cS \to \cD}^{-1} W + \hat{W}$. Since $H_q(\cD) = R_{\cS \to \cD}(H_q(\cS))$, $R_{\cS \to \cD}^{-1}$ is a bounded operator on $H_q(\cD)$. This means that $R_{\cS \to \cD}^{-1} W$ is a cylindrical Wiener process in $\ker(R_{\cS \to \cD})^\perp \subset H_q(\cS)$. By independence, $t \mapsto (\tilde W(t)u_1,\ldots,\tilde W(t)u_n)$ is an $\R^n$-valued Wiener process for all $u_1, \ldots, u_n \in H_q(\cS), n \in \N$ and for all $u,v \in H_q(\cS)$, $\E[(\tilde W (t)u)(\tilde W (t)v)]$ is given by
	\begin{equation}
		\label{eq:lem:extension-of-W:1}
		\begin{split}
			&\E[(W (t) (R_{\cS \to \cD}^{-1})^* u)(W (t)(R_{\cS \to \cD}^{-1})^* v)] + \E[(\hat W (t)u)(\hat W (t)v)] \\
			&\quad=t\inpro[H_q(\cS)]{R_{\cS \to \cD}^{-1} (R_{\cS \to \cD}^{-1})^*u}{v} + t\inpro[H_q(\cS)]{P_{\ker(R_{\cS \to \cD})} u}{v}.
		\end{split}
	\end{equation}
	Since $v \in \ker(R_{\cS \to \cD})^\perp$ yields $v = R_{\cS \to \cD}^{-1} R_{\cS \to \cD} v$, and since $H_q(\cD) = R_{\cS \to \cD}(H_q(\cS))$,
	\begin{equation}
		\label{eq:lem:extension-of-W:2}
		\begin{split}
			\inpro[H_q(\cS)]{R_{\cS \to \cD}^{-1} (R_{\cS \to \cD}^{-1})^*u}{v} &= \inpro[H_q(\cS)]{R_{\cS \to \cD}^{-1} (R_{\cS \to \cD}^{-1})^*u}{R_{\cS \to \cD}^{-1} R_{\cS \to \cD}v} \\ 
			&= \inpro[H_q(\cD)]{(R_{\cS \to \cD}^{-1})^*u}{R_{\cS \to \cD}v} \\
			&= \inpro[H_q(\cS)]{u}{P_{\ker(R_{\cS \to \cD})^\perp}v}, \quad v \in \ker(R_{\cS \to \cD})^\perp.
		\end{split}
	\end{equation}
	Note now that for $x \in \cD$ and $v \in H_q(\cS)$, $q(x,\cdot) = P_{\ker(R_{\cS \to \cD})^\perp} q(x,\cdot)$ as a result of \begin{equation*}\inpro[H_q(\cS)]{(I-P_{\ker(R_{\cS \to \cD})^\perp})q(x,\cdot)}{v} = \inpro[H_q(\cS)]{q(x,\cdot)}{P_{\ker(R_{\cS \to \cD})} v} = (P_{\ker(R_{\cS \to \cD})} v)(x).
	\end{equation*} 
	Since also $P_{\ker(R_{\cS \to \cD})^\perp} = R_{\cS \to \cD}^{-1} R_{\cS \to \cD}$ we find that $q(x,\cdot) = R_{\cS \to \cD}^{-1} q(x,\cdot)|_{\cD}$ for $x \in \cD$. Combining this with the reproducing property, we find that for $x \in \cD$, $v \in \ker(R_{\cS \to \cD})$, $\inpro[H_q(\cS)]{R_{\cS \to \cD}^{-1} q(x,\cdot)|_{\cD}}{v} = \inpro[H_q(\cS)]{q(x,\cdot)}{v}$. By approximating $(R_{\cS \to \cD}^{-1})^*u$ with linear combinations of $(q(x,\cdot))_{x \in \cD}$ in $H_q(\cD)$ (see \cite[Theorem~3]{BT04}), we then obtain
	\begin{equation}
		\label{eq:lem:extension-of-W:3}
		\inpro[H_q(\cS)]{R_{\cS \to \cD}^{-1} (R_{\cS \to \cD}^{-1})^*u}{v} = 0 = \inpro[H_q(\cS)]{u}{P_{\ker(R_{\cS \to \cD})^\perp}v}, \quad v \in \ker(R_{\cS \to \cD}).
	\end{equation}
	Combining~\eqref{eq:lem:extension-of-W:2} and~\eqref{eq:lem:extension-of-W:3} in~\eqref{eq:lem:extension-of-W:1} with the identity $P_{\ker(R_{\cS \to \cD})^\perp} + P_{\ker(R_{\cS \to \cD})} = I$ yields $\E[(\tilde W (t)u)(\tilde W (t)v)] = t\inpro[H_q(\cS)]{u}{v}$ which shows that $\tilde W$ is a standard cylindrical Wiener process on $H_q(\cS)$. The fact that $R_{\cS \to \cD} \tilde W u = W u$ in $L^2(\Omega,\R)$ now follows from the identities $R_{\cS \to \cD} P_{\ker(R_{\cS \to \cD})} = 0$ and $R_{\cS \to \cD} R^{-1}_{\cS \to \cD} = I$ on $R_{\cS \to \cD}(H_q(\cS)) = H_q(\cD)$. 
\end{proof}

We now state our main assumption for $q$ on $\cS \supseteq \cD$. Note that if $q$ satisfies this on $\cS$, the corresponding assumption is also fulfilled for $\cD$. The first part is used to verify Assumption~\ref{assumption:regularity}\ref{assumption:regularity:G}, the second for the numerical analysis of Section~\ref{sec:interpolation}.

\begin{assumption}\label{assumption:q}
	The positive semidefinite kernel $q$ is bounded on $\cS$ and satisfies
	\begin{enumerate}[label=(\alph*)]
		\item \label{assumption:q-holder} for some $\gamma \in (0,1]$ and $C < \infty$ the H\"older-type regularity condition
		\begin{equation*}
			q(x,x) - 2q(x,y) + q(y,y) \le C |x-y|^{2\gamma}, \quad x,y \in \cS,
		\end{equation*}
		\item \label{assumption:q-sobolev} and for some $\mu > 0$ and $\psi > d/\mu$ the embedding property $H_q(\cS) \hookrightarrow W^{\mu, \psi}(\cS)$.
	\end{enumerate}
\end{assumption}

\begin{remark}
	\label{rem:Q-Wiener}
	Since $q$ is bounded, $H_q(\cD) \hookrightarrow H = L^2(\cD)$. The embedding is Hilbert--Schmidt, since continuity of $q$ yields an ONB $(e_j)_{j=1}^\infty$ of $H_q$ such that
	\begin{equation*}
		\norm[\cL_2]{I_{H_q \hookrightarrow H}}^2 := \norm[\cL_2(H_q,H)]{I_{H_q \hookrightarrow H}}^2 = \sum_{j=1}^\infty \int_\cD e_j(x)^2 \dd x = \int_\cD q(x,x) \dd x < \infty,
	\end{equation*}
	see~\eqref{eq:kernel-decomp}. Therefore, $W$, viewed as cylindrical in $H$, is under Assumption~\ref{assumption:q} induced by a so called $Q$-Wiener process $W_Q$ with covariance $Q := I_{H_q \hookrightarrow H} I^*_{H_q \hookrightarrow H} \in \Sigma^+(H)$ \cite[Theorem~8.1]{R11}. The relation to the original process is given by $W(t)(I^*_{H_q \hookrightarrow H} u) = \inpro{W_Q(t)}{u}$ for $u \in H$. The operator $Q$ is of integral type with kernel $q$. We could equivalently work with $W_Q$ instead of $W$ in \eqref{eq:spde}, but the frequent use of the RKHS properties in Section~\ref{sec:rkhs} would make the notation and analysis cumbersome.
\end{remark}

\subsection{Non-linear multiplicative noise operator}

We prove a result relating the H\"older-type regularity of $q$ in Assumption~\ref{assumption:q}\ref{assumption:q-holder} to Assumption~\ref{assumption:regularity}\ref{assumption:regularity:G} on $G$ when $g \colon \R \times \cD \to \R$ in~\eqref{eq:G-def} satisfies, for some constant $C < \infty$,
\begin{equation}
	\label{eq:g-lipschitz}
	|g(x_1,y_1) - g(x_2,y_2)| \le C (|x_1-x_2| + |y_1-y_2|), \quad x_1,x_2 \in \R, y_1,y_2 \in \cD.
\end{equation}
In the special case where $q$ has a known expansion (Example~\ref{ex:kernel-basis} below), $A$ has Dirichlet boundary conditions and $\cD= [0,1]^d, d \in \N$, it was derived in \cite[Section~4]{JR12}. We extend this setting to more general kernels, domains and boundary conditions.

\begin{proposition}
	\label{prop:G-reg}
	Let $G$ be given by~\eqref{eq:G-def} where $g$ satisfies~\eqref{eq:g-lipschitz}. Under Assumption~\ref{assumption:q}\ref{assumption:q-holder}, there is for all $r \in [0,\gamma)$ a constant $C < \infty$ such that
	\begin{enumerate}[label=(\roman*)]
		\item \label{eq:prop:G-reg:lipschitz} $\norm[\cL_2(H_q(\cD),H)]{G(u)-G(v)} \le C \norm{u - v}$ for all $u, v \in H$ and
		\item \label{eq:prop:G-reg:bound}$\norm[\cL_2(H_q(\cD),H^r)]{G(u)} \le C ( 1 + \norm[H^r]{u})$ for all $u \in H^r$.
	\end{enumerate}
	Therefore, Assumption~\ref{assumption:regularity}\ref{assumption:regularity:G} is satisfied for all $r < \min(\gamma,\theta)$, with $\theta$ as in~\eqref{eq:sobolev_id_theta_def}. 
\end{proposition}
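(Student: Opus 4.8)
The plan is to use that $G(u)$ is the multiplication operator $v\mapsto \phi_u v$ with $\phi_u:=g(u(\cdot),\cdot)$, and to evaluate its Hilbert--Schmidt norms against a fixed ONB $(e_j)_{j=1}^\infty$ of $H_q(\cD)$. First note that Assumption~\ref{assumption:q}\ref{assumption:q-holder} together with boundedness of $q$ forces $q$ to be continuous: by Cauchy--Schwarz in $H_q$, $|q(x,y)-q(x',y)|\le \norm[H_q]{q(x,\cdot)-q(x',\cdot)}\norm[H_q]{q(y,\cdot)}\lesssim |x-x'|^{\gamma}$. Hence~\eqref{eq:kernel-decomp} applies. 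For $V\in\{H,H^r\}$ one has $\norm[\cL_2(H_q,V)]{G(u)}^2=\sum_j\norm[V]{\phi_u e_j}^2$, and since all summands are non-negative the $j$-sum may be moved inside the integrals defining the $V$-norm, using $\sum_j e_j(x)e_j(y)=q(x,y)$; the membership $\phi_u e_j\in V$ is recovered a posteriori from finiteness of the resulting expression. Item~\ref{eq:prop:G-reg:lipschitz} is then immediate: with $V=H$ one obtains $\norm[\cL_2(H_q,H)]{G(u)-G(v)}^2=\int_\cD(g(u(x),x)-g(v(x),x))^2q(x,x)\dd x\lesssim\norm{u-v}^2$ by~\eqref{eq:g-lipschitz} and boundedness of $q$. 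The case $r=0$ of item~\ref{eq:prop:G-reg:bound} is the same computation applied to $\phi_u$, together with $\norm[H]{\phi_u}\le\norm[H]{g(0,\cdot)}+C\norm[H]{u}$ and the fact that $g(0,\cdot)$ is bounded on the bounded set $\cD$, so $\norm[H]{g(0,\cdot)}<\infty$.

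For item~\ref{eq:prop:G-reg:bound} with $r\in(0,\gamma)\subset(0,1)$, I would use the Slobodeckij norm~\eqref{eq:sobolev-slobodeckij} with $p=2$, $m=0$, $\sigma=r$; the $L^2$ part is bounded as above by $C\norm[H]{\phi_u}^2\lesssim 1+\norm[H^r]{u}^2$. For the seminorm part I split
\begin{equation*}
	\phi_u(x)e_j(x)-\phi_u(y)e_j(y)=\phi_u(x)\big(e_j(x)-e_j(y)\big)+e_j(y)\big(\phi_u(x)-\phi_u(y)\big),
\end{equation*}
apply $(a+b)^2\le 2a^2+2b^2$, and sum over $j$: the first group gives $2\phi_u(x)^2\big(q(x,x)-2q(x,y)+q(y,y)\big)\le 2C\phi_u(x)^2|x-y|^{2\gamma}$ by Assumption~\ref{assumption:q}\ref{assumption:q-holder}, the second gives $2q(y,y)|\phi_u(x)-\phi_u(y)|^2\lesssim|\phi_u(x)-\phi_u(y)|^2$. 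Dividing by $|x-y|^{d+2r}$ and integrating over $\cD\times\cD$, the first contribution is $\lesssim\int_\cD\phi_u(x)^2\big(\int_\cD|x-y|^{2\gamma-2r-d}\dd y\big)\dd x\lesssim\norm[H]{\phi_u}^2$, the inner integral being finite uniformly in $x\in\cD$ precisely because $2\gamma-2r-d>-d$, i.e.\ $r<\gamma$ --- this is exactly where the strict inequality $r<\gamma$ enters. For the second contribution, $|\phi_u(x)-\phi_u(y)|\le C(|u(x)-u(y)|+|x-y|)$ by~\eqref{eq:g-lipschitz}, so it is $\lesssim\seminorm[H^r]{u}^2+\int_{\cD\times\cD}|x-y|^{2-2r-d}\dd x\dd y$, and the last integral converges because $r<1$. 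Summing the pieces gives $\norm[\cL_2(H_q,H^r)]{G(u)}^2\lesssim 1+\norm[H^r]{u}^2$, which gives item~\ref{eq:prop:G-reg:bound}.

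Finally, for $r<\min(\gamma,\theta)$ I would read off Assumption~\ref{assumption:regularity}\ref{assumption:regularity:G}: its second estimate is exactly item~\ref{eq:prop:G-reg:lipschitz}, and since $\dot{H}^r=H^r$ with equivalent norms for $r\in[0,\theta)$ by~\eqref{eq:sobolev_id_theta_def}, item~\ref{eq:prop:G-reg:bound} yields $\norm[\cL_2(H_q,\dot{H}^r)]{G(u)}\lesssim 1+\norm[\dot{H}^r]{u}$ for $u\in\dot{H}^r$ after absorbing the equivalence constants into $C$; moreover $r<\gamma\le 1$, consistent with the admissible range in Assumption~\ref{assumption:regularity}. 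The only real subtlety I expect is bookkeeping: making the term-by-term manipulation rigorous (Tonelli, with $\phi_u e_j\in H^r$ recovered a posteriori) and keeping every exponent appearing in a singular integral strictly above $-d$, which is guaranteed by $r<\gamma$ and $\gamma\le1$. The conceptual point is the displayed split, which separates the kernel's Hölder regularity (Assumption~\ref{assumption:q}\ref{assumption:q-holder}) from the Sobolev regularity of $\phi_u$, but it is not technically deep.
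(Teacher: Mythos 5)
Your proposal is correct and follows essentially the same route as the paper's proof: the same ONB/Tonelli reduction via $\sum_j e_j(x)e_j(y)=q(x,y)$, the same splitting of $\phi_u(x)e_j(x)-\phi_u(y)e_j(y)$ into a term controlled by Assumption~\ref{assumption:q}\ref{assumption:q-holder} and a term controlled by~\eqref{eq:g-lipschitz}, and the same singular integrals with exponents $2(\gamma-r)-d$ and $2-2r-d$ made integrable by $r<\gamma\le 1$. Your added observation that the H\"older condition forces continuity of $q$ (hence separability of $H_q$ and the validity of~\eqref{eq:kernel-decomp}) is a small detail the paper leaves implicit, and is correctly justified.
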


\begin{proof}
	Note first that Assumption~\ref{assumption:q}\ref{assumption:q-holder} along with the reproducing property of $H_q$ yield that all $v \in H_q$ are bounded. Moreover, \eqref{eq:g-lipschitz} ensures that $\norm{g(u(\cdot),\cdot)}^2 \lesssim 1 + \norm{u}^2$ for $u \in H$ so that $G(u)(v)$ indeed takes values in $H$ for $v \in H_q$.
	
	By Assumption~\ref{assumption:q}\ref{assumption:q-holder}, $H_q$ is separable with an ONB $(e_j)_{j=1}^\infty$. By Tonelli's theorem and the decomposition~\eqref{eq:kernel-decomp} for the bounded kernel $q$, $G$ maps into $\cL_2(H_q(\cD),H)$ since
	\begin{align*}
		\norm[\cL_2(H_q,H)]{G(u)}^2 &= \sum_{j=1}^\infty \int_{\cD} g(u(x),x)^2  e_j(x)^2 \dd x  \\ &\le \left(\sup_{x \in \cD} q(x,x)\right) \int_{\cD} g(u(x),x)^2 \dd x \lesssim  \int_{\cD} x^2 + |u(x)|^2 \dd x.
	\end{align*}
	Similarly, the claim~\ref{eq:prop:G-reg:lipschitz} is obtained from the Lipschitz regularity of $g$, as
	\begin{equation*}
		\norm[\cL_2(H_q,H)]{G(u)-G(v)}^2 \le \left(\sup_{x \in \cD} q(x,x)\right) \int_{\cD} \left( g(u(x),x) - g(v(x),x) \right)^2  q(x,x) \dd x.
	\end{equation*}
	can be bounded by $C\norm{u(x)-v(x)}^2$ for some $C<\infty$. To prove~\ref{eq:prop:G-reg:bound}, we note that the definition of $\norm[H^r]{\cdot}=\norm[W^{r,2}]{\cdot}$ yields that $\norm[\cL_2(H_q,H^r)]{G(u)}^2$ is given by
	\begin{align*}
		\sum_{j=1}^\infty \int_{\cD} |(G(u)e_j)(x)|^2 \dd x + \sum_{j=1}^\infty \int_{\cD \times \cD} \frac{|(G(u)e_j)(x) -  (G(u)e_j)(y)|^2}{|x-y|^{d+2r}}\dd x \dd y.
	\end{align*}
	The first of these terms, $\norm[\cL_2(H_q,H)]{G(u)}^2$, was bounded above. For the second,
	\begin{equation}
		\begin{split}
			\label{eq:prop:G-reg:2}
			&\sum_{j=1}^\infty \int_{\cD \times \cD} \frac{|(G(u)e_j)(x) -  (G(u)e_j)(y)|^2}{|x-y|^{d+2r}}\dd x \dd y \\ 
			&\quad= \int_{\cD \times \cD} \frac{ \sum_{j=1}^\infty |g(u(x),x)e_j(x) - g(u(y),y)e_j(y)|^2}{|x-y|^{d+2r}}\dd x \dd y \\ 
			&\quad\le 2 \int_{\cD \times \cD} \frac{ \big(\sum_{j=1}^\infty 
				(e_j(y))^2\big) |g(u(x),x) - g(u(y),y)|^2 }{|x-y|^{d+2r}}\dd x \dd y \\
			&\qquad+ 2 \int_{\cD \times \cD} \frac{\big(\sum_{j=1}^\infty (e_j(x) - e_j(y))^2\big)|g(u(x),x)|^2}{|x-y|^{d+2r}}\dd x \dd y.
		\end{split}
	\end{equation}
	In the first integral, we use \eqref{eq:kernel-decomp}, \eqref{eq:g-lipschitz} and  Assumption~\ref{assumption:q}\ref{assumption:q-holder} to see that
	\begin{align*}
		&\int_{\cD \times \cD} q(y,y) \frac{|g(u(x),x) - g(u(y),y)|^2}{|x-y|^{d+2r}}\dd x \dd y \\ &\quad\le \left(\sup_{x \in \cD} q(x,x)\right) \int_{\cD \times \cD} \frac{|g(u(x),x) - g(u(y),y)|^2}{|x-y|^{d+2r}}\dd x \dd y \\
		&\quad\lesssim \int_{\cD \times \cD} \frac{|u(x) - u(y)|^2}{|x-y|^{d+2r}}\dd x \dd y + \int_{\cD \times \cD} |x-y|^{2(1-r) - d} \dd x \dd y \lesssim \norm[H^r]{u}^2 + 1,
	\end{align*}
	since $r < \gamma \le 1$. The second integral in~\eqref{eq:prop:G-reg:2} is bounded by
	\begin{align*}
		&\int_{\cD \times \cD} \frac{g(u(x),x)^2 \big(q(x,x)-2q(x,y)+q(y,y)\big)}{|x-y|^{d+2r}}\dd x \dd y \\
		&\quad\le \Big(\sup_{x \neq y \in \cD} \frac{q(x,x)-2q(x,y)+q(y,y)}{|x-y|^{2\gamma}}\Big) \\
		&\hspace{3em}\times \left(\sup_{x \in \cD} \int_{\cD} |x-y|^{2(\gamma-r)-d} \dd y\right) \int_{\cD} |g(u(x),x)|^2 \dd x \lesssim 1 + \norm{u}^2.
	\end{align*}
	
	Assumption~\ref{assumption:regularity}\ref{assumption:regularity:G} follows directly from the fact that $\dot{H}^r = H^r$ for $r < \theta$ in~\eqref{eq:sobolev_id_theta_def}.
\end{proof}

Since $\gamma \le 1$, the bound by $\theta$ on $r$ in Proposition~\ref{prop:G-reg} only applies in the Dirichlet case. It can be dropped if $q(x,\cdot)= 0$ or if $g(0,x) = 0$ for all $x \in \partial \cD$ \cite[Theorem~16.13]{Y10}. Moreover, the result is sharp. Too see this, consider Neumann boundary conditions and $H_q(\cD) := H^{s}$ for $s \in (d/2,d/2+1)$, so that $\gamma = s - d/2$ in Assumption~\ref{assumption:q}\ref{assumption:q-holder} by~\eqref{eq:matern-kernel-holder}. If we let $g(x,y) := 1$ for $x \in \R, y \in \cD$, we have $\norm[\cL_2(H_q,H^r)]{G(u)} = \norm[\cL_2]{I_{H^s \hookrightarrow H^r}}$ for $u \in H^r$. Proposition~\ref{prop:G-reg} shows that $\norm[\cL_2]{I_{H^s \hookrightarrow H^r}} < \infty$ for $r < s - d/2$ whereas $\norm[\cL_2]{I_{H^s \hookrightarrow H^r}} = \infty$ if $r = s - d/2$, see, e.g., \cite[Lemma~2.3]{KLP22}.

\section{Piecewise linear interpolation of noise in SPDE approximations}
\label{sec:interpolation}

By Lemma~\ref{lem:extension-of-W} we write without loss of regularity the mild solution $X$ of~\eqref{eq:mild-solution} as
\begin{equation*}
	X(t) = {S}(t) X(0) + \int_{0}^{t} {S}(t-s) F (X(s)) \dd s + \int_0^t {S}(t-s) G(X(s)) R_{\cS \to \cD} \dd \tilde W(s),
\end{equation*}
for $t \in (0,T]$ and a standard cylindrical Wiener process $\tilde W$ on $H_q(\cS)$. We approximate $X$ by $X_{h,\Delta t}$ through the semi-implicit Euler scheme and piecewise linear finite elements. This was analyzed in~\cite{K14}, here we consider the additional noise discretization error coming from sampling $\tilde W$ pointwise on $\cS$ and interpolating onto $\cD$.

\subsection{Discretization setting}

From here on, we let $\cD$ and $\cS$ be polyhedral, as is common for finite element methods to avoid dealing with curved boundaries \cite{BS08}. Consider two triangulations $\cT_h,\cT_{h'}$ on $\cD$ and $\cS$ with maximal mesh sizes $h, h' \in (0,1]$, and let $V_h \subset H^1(\cD)$ and $V_{h'} \subset H^1(\cS)$ be the spaces of piecewise linear functions on these. By $I_h$ and $I_{h'}$ we denote the piecewise linear interpolants with respect to $\cT_h$ and $\cT_{h'}$. For a function $v$ on $\cD$, $v_h =: I_h v$ is the unique function $v_h \in V_h$ such that $v_h(x_j) = v(x_j)$ for all nodes $x_j$ of $\cT_h$. In terms of the nodal basis functions $\varphi^h_j$,
\begin{equation*}
	I_h v = \Big(\sum_{j=1}^{N_h} \delta_{x_j} \varphi^h_j\Big) v = \sum_{j=1}^{N_h} v(x_j) \varphi^h_j,
\end{equation*}
where $N_h = \dim(V_h)$ is the number of nodes. We need two results on these interpolants. The first generalizes the results in \cite{BB01} to $p \neq 2$, with a proof in Appendix~\ref{sec:appendix}.

\begin{proposition}
	\label{prop:interpolation}
	Let $\cD \subset \R^d, d \le 3$, be polyhedral with regular triangulation $\cT_h$. Let $1 < p < \infty$, $sp > d$ and $r \in [0,s] \cap [0,1+1/p)$. Assume also that $\cT_h$ is quasiuniform for $s < 1$ and $r > 0$. Then, there is a $C < \infty$ such that
	\begin{equation*}
		\norm[W^{r,p}]{(I - I_h)v} \le C h^{\min(s-r,2)} \norm[W^{s,p}]{v} \text{ for all } h \in (0,1], v \in W^{s,p}(\cD).
	\end{equation*}
\end{proposition}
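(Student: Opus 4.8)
The plan is to prove the interpolation estimate by interpolating (in the sense of function spaces) between three known endpoint cases. First I would handle the integer endpoints for the \emph{interpolation error in $W^s_p$} directly: for $r=0$ and $r \in \{1\}$ (when $1 < 1+1/p$, i.e.\ always) one has the classical local Bramble--Hilbert / scaling estimates $\norm[L^p]{(I-I_h)v} \lesssim h^{\min(s,2)}\norm[W^s_p]{v}$ and, when $s \ge 1$, $\norm[W^1_p]{(I-I_h)v} \lesssim h^{\min(s-1,2)}\norm[W^s_p]{v}$, obtained elementwise by mapping to a reference simplex, using that $I_h$ reproduces polynomials of degree $\le 1$, the Sobolev embedding $W^s_p \hookrightarrow C^0$ (which is exactly where $sp > d$ enters, so nodal values make sense), and summing over elements; quasiuniformity is used when $s<1$ so that the local mesh sizes are comparable to $h$. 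The case $s \ge 2$ needs the additional observation that $I_h$ does not improve beyond second order, giving the $\min(\cdot,2)$ truncation.

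Next I would promote these to fractional $r \in (0,1)$ by real interpolation. Writing $W^r_p = (L^p, W^1_p)_{r,p}$ (the Gagliardo/Slobodeckij space coincides with the real interpolation space, with equivalent norms on Lipschitz domains), the operator $I - I_h$ is bounded $W^s_p \to L^p$ with norm $\lesssim h^{\min(s,2)}$ and bounded $W^s_p \to W^1_p$ with norm $\lesssim h^{\min(s-1,2)}$ whenever $s \ge 1$; interpolation of the target then yields $\norm[W^r_p]{(I-I_h)v} \lesssim h^{(1-r)\min(s,2) + r\min(s-1,2)}\norm[W^s_p]{v}$. One checks that the exponent equals $\min(s-r,2)$ in all the relevant regimes: when $s \le 2$ it is $(1-r)s + r(s-1) = s-r$, and when $s \ge 2$ a short case analysis on whether $s - 1 \ge 2$ gives the claimed value (with the convexity of $t \mapsto \min(t,2)$ ensuring the exponent is never worse than $\min(s-r,2)$, and the sharp value recovered by also interpolating against the cruder bound when $s-1 < 2 \le s$). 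For the range $r \in [1, 1+1/p)$ one instead interpolates between the $W^1_p$ estimate and a $W^{1+1/p-\varepsilon}_p$-type estimate, or argues directly; the restriction $r < 1+1/p$ is the natural barrier because $I_h$ maps into $V_h \subset W^{1+1/p-\varepsilon}_p$ but not into $W^{1+1/p}_p$ (kinks across element faces), so the interpolation error simply is not finite in stronger norms.

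I also need to be careful that the hypotheses on the mesh match each endpoint: for $r > 0$ and $s < 1$ the local estimates require quasiuniformity (to convert local $h_T$ into the global $h$ in a fractional seminorm, which is nonlocal and couples different elements), whereas for $s \ge 1$ or $r = 0$ mere shape-regularity suffices; this is exactly the dichotomy in the statement. The fractional seminorm's nonlocality is the one genuine subtlety — the double integral over $\cD \times \cD$ does not split over elements — and the clean way around it is to avoid estimating the seminorm by hand and instead invoke the real-interpolation characterization, reducing everything to the two integer-order estimates which \emph{are} local. I would therefore structure the Appendix proof as: (1) the two (or three) local integer-order estimates via reference-element scaling and Bramble--Hilbert, with the Sobolev embedding supplying well-definedness of $I_h v$; (2) the real-interpolation identification $W^r_p = (L^p,W^1_p)_{r,p}$ with equivalent norms; (3) interpolation of the operator $I-I_h$ and the elementary exponent bookkeeping to recover $\min(s-r,2)$.

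The main obstacle I anticipate is the exponent bookkeeping together with simultaneously tracking which mesh hypothesis is active: one must interpolate not just once but against several endpoint bounds (the sharp one and, when $2 \in (s-1,s)$, a cruder one) to extract the exact $h^{\min(s-r,2)}$ rather than a suboptimal power, and one must verify at each step that quasiuniformity is invoked \emph{only} in the regime $s<1,\ r>0$ so that the stated generality (shape-regular meshes otherwise) is genuinely achieved. The function-space machinery itself — Sobolev embedding, Bramble--Hilbert, real interpolation — is standard; it is the uniform-in-$h$ constants and the precise matching of hypotheses that require care.
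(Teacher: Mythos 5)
Your overall architecture (integer-order local estimates via reference-element scaling and Bramble--Hilbert, then real interpolation of the target space, with quasiuniformity isolated to the regime $s<1$, $r>0$) matches the paper's proof for the subrange $r\in[0,1]$, and your identification of $1+1/p$ as the natural barrier is correct. But there is a genuine gap: your plan produces no proof for $r\in(1,1+1/p)$, which is part of the stated range and is the technically novel content of the appendix. Real interpolation between the targets $L^p$ and $W^1_p$ only yields $W^r_p$ for $r\in(0,1)$; to reach $r>1$ you must first \emph{establish} an endpoint estimate in some $W^{r'}_p$ with $r'\in(1,1+1/p)$, and your proposal defers exactly this to ``a $W^{1+1/p-\varepsilon}_p$-type estimate, or argues directly'' without any mechanism. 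The paper's proof supplies this mechanism with two specific tools you do not mention: (i) a localization lemma (the $p\neq2$ generalization of \cite[Lemma~2.1]{BB01}, equation \eqref{eq:thm:interpolation:1}) that dominates the nonlocal seminorm $\seminorm[W^{1+\lambda}_p(\cD)]{w}$ by element-local seminorms plus weighted integrals $\int_T |D^\alpha w(x)|^p\,(\inf_{y\in\partial T}|x-y|)^{-p\lambda}\dd x$, and (ii) a Hardy-type inequality \eqref{eq:thm:interpolation:4} on the reference simplex valid precisely for $\lambda<1/p$ — this is where the restriction $r<1+1/p$ actually enters the proof, not merely as a heuristic about kinks. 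In fact the paper's interpolation step for $r\in[0,1]$, $s\in(1,2]$ runs between the endpoints $r=0$ and $r\in(1,1+1/p)$, so the direct supercritical estimate is needed even to carry out the part of the argument you do sketch.

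A secondary imprecision: for $s<1$ and $0<r\le s$, the required $W^1_p$ endpoint $\norm[W^1_p]{(I-I_h)v}\lesssim h^{s-1}\norm[W^s_p]{v}$ has a \emph{negative} power of $h$; it is an inverse-type stability estimate $\seminorm[W^1_p(\cD)]{I_h v}\lesssim h^{s-1}\seminorm[W^s_p(\cD)]{v}$, and this — not the conversion of local $h_T$ to global $h$ inside a fractional seminorm — is where quasiuniformity is genuinely consumed. You should state and prove that inverse estimate explicitly (the paper does so via \eqref{eq:thm:interpolation:5} and scaling) before interpolating against the $L^p$ bound. With the localization lemma, the reference-simplex Hardy inequality, and the inverse estimate added as explicit ingredients, your outline would close; without them it does not.
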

We recall the notation $W^{s,p}$ for $W^{s,p}(\cD)$ when the spatial domain in the fractional Sobolev space is clear from context, see Section~\ref{sec:fractional-sobolev-spaces}.
\begin{lemma}
	\label{lem:hs-interpolant-bound}
	Let $\cD \subset \cS \subset \R^d, d \le 3,$ be polyhedral domains with triangulations $\cT_h$ and $\cT_{h'}$. Let $q$ be a bounded and continuous positive semidefinite kernel on $\cS$ and let $B \in \cL(L^\infty(\cD),L^2(\cD))$ be given by $(B v)(x) := b(x)v(x)$ for a.e.\ $x \in \cD$. Then
	\begin{equation*}
		\norm[\cL_2(H_q(\cS),L^2(\cD))]{B I_h R_{\cS \to \cD} I_{h'}} \le 2 \sup_{x,y \in \cS} \sqrt{|q(x,y)|} \norm[L^2(\cD)]{b} \text{ for all } b \in L^2(\cD). 
	\end{equation*}
\end{lemma}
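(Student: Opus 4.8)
The plan is to expand the Hilbert--Schmidt norm over an orthonormal basis of $H_q(\cS)$ and exploit the kernel decomposition \eqref{eq:kernel-decomp} to turn the sum into a pointwise evaluation of $q$ on the nodes of $\cT_{h'}$. Concretely, fix an ONB $(e_j)_{j=1}^\infty$ of $H_q(\cS)$. Since $I_{h'}$ maps into $V_{h'} = \mathrm{span}(\varphi^{h'}_k)_{k=1}^{N_{h'}}$ and $I_{h'}e_j = \sum_{k=1}^{N_{h'}} e_j(x'_k)\varphi^{h'}_k$ (using that evaluation at $x'_k$ makes sense because elements of $H_q(\cS)$ are bounded by the reproducing property), one computes
\begin{equation*}
	\norm[\cL_2(H_q(\cS),L^2(\cD))]{B I_h R_{\cS \to \cD} I_{h'}}^2 = \sum_{j=1}^\infty \Bignorm[L^2(\cD)]{\sum_{k=1}^{N_{h'}} e_j(x'_k)\, B I_h R_{\cS \to \cD} \varphi^{h'}_k}^2.
\end{equation*}
Writing $\psi_k := B I_h R_{\cS \to \cD}\varphi^{h'}_k \in L^2(\cD)$, the right-hand side equals $\sum_{j=1}^\infty \sum_{k,\ell} e_j(x'_k) e_j(x'_\ell) \inpro[L^2(\cD)]{\psi_k}{\psi_\ell}$, and interchanging the (absolutely convergent, by boundedness of $q$) sums together with $\sum_{j=1}^\infty e_j(x'_k)e_j(x'_\ell) = q(x'_k,x'_\ell)$ gives $\sum_{k,\ell} q(x'_k,x'_\ell)\inpro[L^2(\cD)]{\psi_k}{\psi_\ell}$.

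Next I would bound this quadratic form. Using $|q(x'_k,x'_\ell)| \le \sup_{x,y \in \cS}|q(x,y)| =: M^2$ and $|\inpro[L^2(\cD)]{\psi_k}{\psi_\ell}| \le \norm[L^2(\cD)]{\psi_k}\norm[L^2(\cD)]{\psi_\ell}$, one gets $\sum_{k,\ell} q(x'_k,x'_\ell)\inpro[L^2(\cD)]{\psi_k}{\psi_\ell} \le M^2 \big(\sum_k \norm[L^2(\cD)]{\psi_k}\big)^2$, but this is too lossy. Instead I would observe that, since $q$ restricted to the nodes is a positive semidefinite matrix, the quadratic form $\sum_{k,\ell} q(x'_k,x'_\ell) \xi_k \xi_\ell$ is dominated by $M^2 \big(\sum_k |\xi_k|\big)^2$ only crudely; the right move is to use that $(q(x'_k,x'_\ell))_{k,\ell}$ is positive semidefinite with diagonal entries $q(x'_k,x'_k) \le M^2$, hence by Schur/Gershgorin-type reasoning its operator norm on $\R^{N_{h'}}$ is at most $M^2 N_{h'}$ — still not what we want. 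The genuinely right step is to bound componentwise: $\sum_{k,\ell} q(x'_k,x'_\ell)\inpro[L^2(\cD)]{\psi_k}{\psi_\ell} \le M^2 \sum_{k,\ell} \norm[L^2(\cD)]{\psi_k}\norm[L^2(\cD)]{\psi_\ell} = M^2\big(\sum_k \norm[L^2(\cD)]{\psi_k}\big)^2$, and then to control $\sum_k \norm[L^2(\cD)]{\psi_k}$ directly. Here the key geometric fact is that the $\varphi^{h'}_k$ have pairwise disjoint interiors of support up to bounded overlap, and $I_h R_{\cS \to \cD}$ preserves this finite-overlap structure on $\cD$; more importantly $B$ acts by multiplication by $b$, so $\norm[L^2(\cD)]{\psi_k} \le \norm[L^\infty(\cD)]{I_h R_{\cS \to \cD}\varphi^{h'}_k} \norm[L^2(\mathrm{supp}\,\psi_k)]{b}$, and since $0 \le I_h R_{\cS \to \cD}\varphi^{h'}_k \le 1$ (interpolation of a function bounded by $1$) the factor $\norm[L^\infty]{\cdot} \le 1$. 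Then a Cauchy--Schwarz over $k$ with the bounded-overlap property ($\sum_k \mathbf{1}_{\mathrm{supp}\,\psi_k}(x) \le C_d$ for a.e.\ $x$, independent of the meshes for shape-regular triangulations — in fact for linear elements on $\cD \subset \R^d$, $d\le 3$, each point lies in at most $d+1$ nodal supports) gives $\big(\sum_k \norm[L^2(\cD)]{\psi_k}\big)^2$ is \emph{not} directly $\lesssim \norm[L^2(\cD)]{b}^2$ — summing $L^2$ norms is the wrong order.

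So the cleanest route, and the one I would actually write, is to go back one step and \emph{not} pull out $q$ by absolute values prematurely. Instead, use that $q$ is the covariance kernel: for each $x \in \cD$, $\sum_{k,\ell} q(x'_k,x'_\ell) \big(I_h R_{\cS\to\cD}\varphi^{h'}_k\big)(x)\big(I_h R_{\cS\to\cD}\varphi^{h'}_\ell\big)(x) = \mathrm{Var}\big(\sum_k Z_k (I_hR_{\cS\to\cD}\varphi^{h'}_k)(x)\big)$ where $(Z_k)$ is centered Gaussian with covariance $(q(x'_k,x'_\ell))$; this equals $\mathrm{Var}\big((I_h R_{\cS\to\cD}\tilde g)(x)\big)$ where $\tilde g(x'_k) = Z_k$ is a Gaussian field with $\E[\tilde g(x'_k)\tilde g(x'_\ell)] = q(x'_k,x'_\ell)$, so pointwise it is at most $\E\big[\norm[L^\infty]{I_h R_{\cS\to\cD}(\text{interpolant of }\tilde g)}^2\big] \le M^2$ because $I_h$, $R_{\cS\to\cD}$ and $I_{h'}$ are all contractions in $L^\infty$ and the Gaussian field evaluated at any point has variance $\le M^2$. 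Integrating $|b(x)|^2$ times this bound over $\cD$ gives the result with constant $M^2\norm[L^2(\cD)]{b}^2$, and the factor $2$ in the statement is slack I would absorb or trace through the $L^\infty$-contraction estimates (each interpolant satisfies $\norm[L^\infty]{I_h v} \le \norm[L^\infty]{v}$, losing nothing, so the $2$ is comfortable room). The main obstacle is organizing the reduction to a pointwise variance bound cleanly without invoking an auxiliary Gaussian field if one prefers a purely deterministic argument; the deterministic version is: $\sum_{k,\ell} q(x'_k,x'_\ell)\eta_k\eta_\ell \le M^2 \big(\sum_k |\eta_k|\big)^2$ is too weak, but $\le \big(\sup_k q(x'_k,x'_k)\big)\big(\sum_k|\eta_k|\big)^2$ combined with $\sum_k |(I_hR_{\cS\to\cD}\varphi^{h'}_k)(x)| = (I_h R_{\cS\to\cD}\mathbf{1})(x) = 1$ (partition of unity, preserved by interpolation of the constant $1$) gives exactly $\sum_{k,\ell} q(x'_k,x'_\ell)(I_hR_{\cS\to\cD}\varphi^{h'}_k)(x)(I_hR_{\cS\to\cD}\varphi^{h'}_\ell)(x) \le M^2$ pointwise, and then $\int_{\cD}|b(x)|^2 M^2 \dd x = M^2\norm[L^2(\cD)]{b}^2$. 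That partition-of-unity identity is the crux, and it holds provided $\cD$ is covered by $\cT_h$ and the restriction of the $V_{h'}$ partition of unity to $\cD$ interpolates to the $V_h$ partition of unity of the constant function — which is immediate since $I_h$ reproduces $1$ and $R_{\cS\to\cD}$ followed by $I_h$ applied to $\sum_k \varphi^{h'}_k \equiv 1$ yields $I_h(\mathbf 1|_{\cD}) = \mathbf 1|_{\cD}$. I would present the deterministic version, as it is shortest and avoids introducing probabilistic scaffolding mid-proof.
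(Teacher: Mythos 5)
Your final ``deterministic version'' is correct and is essentially the paper's own argument: both expand the Hilbert--Schmidt norm over an ONB of $H_q(\cS)$, use the kernel decomposition \eqref{eq:kernel-decomp} to reduce to the finite quadratic form $\sum_{k,\ell}q(x'_k,x'_\ell)(I_hR_{\cS\to\cD}\varphi^{h'}_k)(x)(I_hR_{\cS\to\cD}\varphi^{h'}_\ell)(x)$, and bound it pointwise via boundedness of $q$ together with non-negativity and the partition-of-unity property of the nodal bases (your route even yields constant $1$ where the paper's counting argument gives $2$). The several discarded false starts preceding it should simply be cut; only the last paragraph constitutes the proof.
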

\begin{proof}
	Let us write $\varphi^h$ and $\varphi^{h'}$ for the nodal bases on $V_h$ and $V_{h'}$. Then 
	\begin{equation*}
		\norm[\cL_2(H_q(\cS),L^2(\cD))]{B I_h R_{\cS \to \cD} I_{h'}}^2 = \sum_{j = 1}^\infty \int_{\cD} \Big| b(x) \sum_{k=1}^{N_{h'}} \sum_{\ell=1}^{N_h} \varphi^{h'}_k(x_\ell) \varphi^h_\ell(x) e_j(x_k) \Big|^2	\dd x.
	\end{equation*}
	Using~\eqref{eq:kernel-decomp}, this can be written as 
	\begin{align*}
		&\int_{\cD} b(x)^2 \sum_{k,m=1}^{N_{h'}} \sum_{\ell,n=1}^{N_h} \varphi^{h'}_k(x_\ell) \varphi^{h'}_m(x_n) \varphi^h_\ell(x) \varphi^h_\ell(x) \varphi^h_n(x) q(x_k,x_m) \dd x \\
		&\quad= \sum_{m=1}^{N_{h'}} \sum_{n=1}^{N_h} \int_{\cD} b(x)^2 \left(I_h R_{\cS \to \cD} I_{h'} q(\cdot,x_m)\right)(x) \varphi^h_n(x) \varphi^{h'}_m(x_n) \dd x.
	\end{align*}
	By stability of $I_h$ and $I_{h'}$ in $L^\infty$, $|I_h R_{\cS \to \cD} I_{h'} q(\cdot,x_m)| \le \sup_{x,y \in \cS} |q(x,y)|$. Moreover, $\varphi^h, \varphi^{h'}$ are non-negative, sum to $1$ and $\varphi^{h'}_m(x_n) \neq 0$ for at most $4$ indices $m$. Thus, 
	\begin{align*}
		\norm[\cL_2(H_q(\cS),L^2(\cD))]{B I_h R_{\cS \to \cD} I_{h'}}^2 &\le 4 \sup_{x,y \in \cS} |q(x,y)| \int_{\cD} b(x)^2 \sum_{n=1}^{N_h} \varphi^h_n(x) \dd x \\ &= 4 \sup_{x,y \in \cS} |q(x,y)| \int_{\cD} b(x)^2 \dd x \\ &= 4 \sup_{x,y \in \cS} |q(x,y)| \norm[L^2(\cD)]{b}^2.
	\end{align*}
\end{proof}

We next set $\dot{V}_h := V_h \cap \dot{H}^1$ and denote by $\dot P_h \colon H \to \dot V_h$ and $\dot R_h \colon \dot{H}^1 \to \dot V_h$ the orthogonal projections under the topologies of $H$ and $\dot{H}^1$, respectively. Let $A_h \in \Sigma^+(V_h)$ be defined by $\inpro{A_h u_h}{v_h} := a(u_h,v_h)$ for $u_h,v_h \in \dot V_h$ so that $\norm{A_h^{1/2} v_h} = \norm[\dot{H}^1]{v_h}$. Combining this equality with \cite[(3.15)]{K14}, we find $C_1, C_2 \in (0,\infty)$ such that
\begin{equation}
	\label{eq:disc-norm-equiv}
	C_1 \norm[\dot{H}^r]{v_h} \le \norm[H]{A_h^{\frac{r}{2}} v_h} \le C_2 \norm[\dot{H}^r]{v_h}, \quad v_h \in \dot{V}_h, h \in (0,1], r \in [-1,1],
\end{equation}
using also an interpolation argument. With this, we can introduce $X_{h,\Delta t}$. For a time step ${\Delta t}\in(0,1]$, let $(t_j)_{j =  0}^\infty$ be given by $t_j={\Delta t} j$ and $N_{\Delta t} +1 = \inf\{j\in\N : t_j \notin [0,T]\}$. Set $X_{h,\Delta t}^0 := \dot{P}_h X_0$ and let $X_{h,\Delta t}^j$ solve
\begin{equation}
	\label{eq:spde-approximation}
	X_{h,\Delta t}^j - X_{h,\Delta t}^{j-1} + \Delta t (A_hX_{h,\Delta t}^j + \dot{P}_h F(X_{h,\Delta t}^{j-1}) ) = \dot{P}_h G(X_{h,\Delta t}^{j-1}) I_{h} R_{\cS \to \cD} I_{h'} \Delta \tilde W^j
\end{equation}
for $j = 1, \ldots, N_{\Delta t}$. Here $\Delta \tilde W^j := \tilde W(t_j)- \tilde W(t_{j-1})$. This is the same approximation as that of~\cite[Section~3.6]{K14}, but with $\Delta W^j := R_{\cS \to \cD} \Delta \tilde W^j$ replaced by $I_{h} R_{\cS \to \cD} I_{h'} \Delta \tilde W^j$.

Let us write $S_{h,{\Delta t}}:=(I +{\Delta t} A_h)^{-1} \dot{P}_h$. By \cite[Lemma~7.3]{T06}, for some $C < \infty$,
\begin{equation}
	\label{eq:disc-semi-smooth}
	\norm[\cL(H)]{A_h^\frac{r}{2} S_{h,{\Delta t}}^j } \le C t_j^{-r/2}, \quad h \in (0,1], r \in [0,2], j  = 1, \ldots, N_{\Delta t}+1,
\end{equation}
and by \cite[Lemma~3.12]{K14} (building on \cite[Theorem~7.8]{T06}), there is a $C < \infty$ such that
\begin{equation}
	\label{eq:disc-semi-error}
	\norm[\cL(H)]{(S(t_j) - S^j_{h,\Delta t})A^{-\frac{r}{2}}} \le C (h^s + \Delta t^{\frac{s}{2}}) t_j^{-\frac{s-r}{2}}, \quad h \in (0,1], j  = 1, \ldots, N_{\Delta t}+1,
\end{equation} 
where $0 \le r \le s \le 2$. Estimate~\eqref{eq:disc-semi-error} requires a bound on $\dot{R}_h$ (see \cite[Assumption~3.3]{K14}) to be fulfilled. This follows from the fact that since $\cD$ is convex, elliptic regularity yields $\sup_{h \in (0,1]} h^{-r} \norm[\cL(\dot{H}^r,H)]{I-\dot{R}_h} \lesssim 1$ for $r \in \{1,2\}$ (see \cite[p. 799]{FS91}). 

For all $n = 0, \ldots, N_{\Delta t}$, we can write $X^n_{h,\Delta t}$ in closed form as
\begin{equation*}
	X^n_{h,\Delta t} = S^n_{h,\Delta t}  X_0 + \Delta t \sum^{n-1}_{j=0} S^{n-j}_{h,\Delta t}  F(X^{j}_{h,\Delta t}) + \sum^{n-1}_{j=0} S^{n-j}_{h,\Delta t}  G(X_{h,\Delta t}^{j}) I_{h} R_{\cS \to \cD} I_{h'} \Delta \tilde W^j.
\end{equation*} 

\subsection{The main result}

In this section, we derive an error bound on $X - X_{h,\Delta t}$ under noise interpolation. First, we need a regularity estimate for $X_{h,\Delta t}$.
\begin{lemma}
	\label{lem:disc-regularity}
	Suppose $X_0$ and $F$ satisfy Assumption~\ref{assumption:regularity}\ref{assumption:regularity:X0}-\ref{assumption:regularity:F} for some $p \ge 2, r \in [0,1)$. Suppose also that $q$ satisfies Assumption~\ref{assumption:q}\ref{assumption:q-holder} and let $G$ in~\eqref{eq:G-def} satisfy~\eqref{eq:g-lipschitz}. Then, there is for all $s \in [0,1)$ a constant $C < \infty$ such that 
	\begin{equation*}
		\sup_{n \in \{1, \ldots, N_{\Delta t}\}} \norm[L^p(\Omega,\dot{H}^s)]{X_{h,\Delta t}^n} \le C \text{ for all } \Delta t, h, h' \in (0,1].
	\end{equation*}
\end{lemma}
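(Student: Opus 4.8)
The plan is to establish a discrete analogue of the standard regularity estimate $\sup_t \norm[L^p(\Omega,\dot H^{r+1})]{X(t)} < \infty$ for the scheme \eqref{eq:spde-approximation}, but settling for the weaker exponent $s \in [0,1)$ rather than $r+1$ because the interpolated noise $I_h R_{\cS\to\cD} I_{h'}\Delta\tilde W^j$ is only $L^2$-valued, not $\dot H^r$-valued, and so one cannot expect to recover the full $\dot H^{r+1}$ regularity uniformly in $h'$. I would start from the closed-form expression
\begin{equation*}
	X^n_{h,\Delta t} = S^n_{h,\Delta t}  X_0 + \Delta t \sum^{n-1}_{j=0} S^{n-j}_{h,\Delta t}  F(X^{j}_{h,\Delta t}) + \sum^{n-1}_{j=0} S^{n-j}_{h,\Delta t}  G(X_{h,\Delta t}^{j}) I_{h} R_{\cS \to \cD} I_{h'} \Delta \tilde W^j,
\end{equation*}
apply $A_h^{s/2}$, take $\dot H^s$-norms (equivalently $\norm[H]{A_h^{s/2}\cdot}$ up to constants by \eqref{eq:disc-norm-equiv}), and bound the three terms separately, then close the argument with a discrete Gronwall inequality.

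For the first (deterministic initial-value) term, \eqref{eq:disc-semi-smooth} with $r=s$ gives $\norm[\cL(H)]{A_h^{s/2} S^n_{h,\Delta t}} \lesssim t_n^{-s/2}$; combined with $\dot P_h$-stability and Assumption~\ref{assumption:regularity}\ref{assumption:regularity:X0} this is $\lesssim t_n^{-s/2}\norm[L^p(\Omega,H)]{X_0}$, but since $t_n$ can be small I would instead split off one factor of $S_{h,\Delta t}$ and use the smoothing only down to the regularity already available in $X_0$, i.e. use that $X_0 \in \dot H^{r+1}$ so $A_h^{s/2}S^n_{h,\Delta t}X_0$ is uniformly bounded whenever $s \le r+1$ (which holds since $s<1\le r+1$) — actually the cleanest route is $\norm[\dot H^s]{S^n_{h,\Delta t}X_0} \le \norm[\dot H^s]{\dot P_h X_0} \lesssim \norm[\dot H^s]{X_0}$ using the $\dot H^s$-stability of $\dot P_h$ for $s \le 1$ and $\dot H^{r+1}\hookrightarrow \dot H^s$. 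For the drift term, using Assumption~\ref{assumption:regularity}\ref{assumption:regularity:F}(i) to get $\norm[\dot H^{-1}]{F(X^j_{h,\Delta t})} \lesssim 1 + \norm[H]{X^j_{h,\Delta t}}$ and \eqref{eq:disc-semi-smooth} with $r = s+1 \in [0,2]$, one bounds the $j$-th summand by $t_{n-j}^{-(s+1)/2}(1 + \norm[L^p(\Omega,H)]{X^j_{h,\Delta t}})$, and $\Delta t \sum_j t_{n-j}^{-(s+1)/2}$ is uniformly bounded since $(s+1)/2 < 1$. The stochastic term is the crux: I would use the discrete Burkholder–Davis–Gundy (or the $L^p(\Omega)$-bound for sums of conditionally independent Gaussian increments, cf.\ \cite{K14}) to reduce to
\begin{equation*}
	\Big\| \sum^{n-1}_{j=0} S^{n-j}_{h,\Delta t}  G(X_{h,\Delta t}^{j}) I_{h} R_{\cS \to \cD} I_{h'} \Delta \tilde W^j \Big\|_{L^p(\Omega,\dot H^s)}^2 \lesssim \Delta t \sum^{n-1}_{j=0} \E\big[\norm[\cL_2(H_q(\cS),\dot H^s)]{A_h^{s/2} S^{n-j}_{h,\Delta t} G(X^j_{h,\Delta t}) I_h R_{\cS\to\cD} I_{h'}}^p\big]^{2/p},
\end{equation*}
then split $A_h^{s/2}S^{n-j}_{h,\Delta t}$ off as an $\cL(H)$-factor of norm $\lesssim t_{n-j}^{-s/2}$ and apply Lemma~\ref{lem:hs-interpolant-bound} with $b = g(X^j_{h,\Delta t}(\cdot),\cdot)$ (noting $G(u)w = B_u w$ is multiplication by $b = g(u(\cdot),\cdot) \in L^2(\cD)$, with $\norm[L^2(\cD)]{b} \lesssim 1 + \norm[H]{u}$ by \eqref{eq:g-lipschitz}) to get the summand bounded by $t_{n-j}^{-s/2}(1 + \norm[L^p(\Omega,H)]{X^j_{h,\Delta t}})$ — here the key point is that the Hilbert–Schmidt norm of the interpolated multiplication operator is controlled uniformly in $h,h'$ by Lemma~\ref{lem:hs-interpolant-bound}, which is exactly what makes the $h'$-dependence harmless. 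Since $\Delta t \sum_j t_{n-j}^{-s}$ is uniformly bounded for $s < 1$, the stochastic contribution is $\lesssim 1 + \Delta t\sum_j t_{n-j}^{-s}\sup_{i\le j}\norm[L^p(\Omega,H)]{X^i_{h,\Delta t}}^2$.

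Collecting the three bounds yields, with $\phi_n := \sup_{i \le n}\norm[L^p(\Omega,\dot H^s)]{X^i_{h,\Delta t}}$, an inequality of the form $\phi_n^2 \lesssim 1 + \Delta t \sum_{j=0}^{n-1} t_{n-j}^{-\max(s,(s+1)/2)} \phi_j^2$ (absorbing lower-order regularity via $\dot H^s \hookrightarrow H$), to which a weakly singular discrete Gronwall lemma (as used in \cite{K14}) applies to give $\sup_n \phi_n \le C$ independent of $\Delta t, h, h'$. The main obstacle is the bookkeeping around the stochastic term: one must verify the conditional-Gaussianity structure justifies the BDG-type reduction at the $\dot H^s$ level, and one must be careful that the exponent $s$ enters twice (once from the smoothing $A_h^{s/2}S^j_{h,\Delta t}$ and once, indirectly, through needing $s<1$ for summability), so the Gronwall singularity exponent is $\max(s, (s+1)/2) < 1$ precisely when $s<1$ — which is why the statement restricts to $s\in[0,1)$. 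A secondary subtlety is that Lemma~\ref{lem:hs-interpolant-bound} is stated for $\cL_2(H_q(\cS),L^2(\cD))$, i.e.\ only for $s=0$; promoting to general $s$ is done exactly as above by peeling off the smoothing operator in $\cL(H)$, so no stronger interpolant bound is needed.
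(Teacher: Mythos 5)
Your proposal is correct and follows essentially the same route as the paper: the closed-form representation, the BDG inequality for the stochastic sum, peeling off $A_h^{s/2}S^{n-j}_{h,\Delta t}$ as an $\cL(H)$-factor of norm $t_{n-j}^{-s/2}$ via \eqref{eq:disc-norm-equiv} and \eqref{eq:disc-semi-smooth}, Lemma~\ref{lem:hs-interpolant-bound} with $b = g(X^j_{h,\Delta t}(\cdot),\cdot)$ for the uniform-in-$h,h'$ Hilbert--Schmidt bound, and a weakly singular discrete Gronwall lemma. Your extra remarks on the deterministic terms simply fill in what the paper dismisses as ``similarly bounded,'' and your observation that no $\dot H^s$-level interpolant bound is needed matches the paper's argument exactly.
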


\begin{proof}
	By the Burkholder--Davis--Gundy (BDG) inequality \cite[Theorem~4.37]{DPZ14}:
	\begin{equation}
		\label{eq:lem:disc-regularity:pfeq1}
		\begin{split}
			\norm[L^p(\Omega,\dot{H}^s)]{X_{h,\Delta t}^n}^2 &\lesssim \norm[L^p(\Omega,\dot{H}^s)]{S^n_{h,\Delta t}  X_0}^2 + \Bignorm[L^p(\Omega,\dot{H}^s)]{\Delta t \sum^{n-1}_{j=0} S^{n-j}_{h,\Delta t}  F(X^{j}_{h,\Delta t})}^2 \\ &\quad+ \Delta t \sum^{n-1}_{j=0}  \norm[L^p(\Omega,\cL_2(H_q(\cS),\dot{H}^s))]{S^{n-j}_{h,\Delta t}  G(X_{h,\Delta t}^{j}) I_{h} R_{\cS \to \cD} I_{h'}}^2.
		\end{split}
	\end{equation}
	For the last of these terms, we use~\eqref{eq:disc-norm-equiv}, \eqref{eq:disc-semi-smooth}, Lemma~\ref{lem:hs-interpolant-bound} and~\eqref{eq:g-lipschitz} to bound it by
	\begin{align*}
		&\Delta t\sum^{n-1}_{j=0}  \norm[L^p(\Omega,\cL_2(H_q(\cS),H))]{A_h^{\frac{s}{2}} S^{n-j}_{h,\Delta t}  G(X_{h,\Delta t}^{j}) I_{h} R_{\cS \to \cD} I_{h'}}^2 \\
		&\quad\lesssim \Delta t\sum^{n-1}_{j=0}  t_{n-j}^{-s} \norm[L^p(\Omega,\cL_2(H_q(\cS),H))]{G(X_{h,\Delta t}^{j}) I_{h} R_{\cS \to \cD} I_{h'}}^2 \\
		&\quad\lesssim \Delta t\sum^{n-1}_{j=0}  t_{n-j}^{-s} \norm[L^p(\Omega,H)]{g(X_{h,\Delta t}^{j})}^2 \lesssim \Delta t\sum^{n-1}_{j=0}  t_{n-j}^{-s} (1+ \norm[L^p(\Omega,H)]{X_{h,\Delta t}^{j}}^2).
	\end{align*}
	The first two terms of~\eqref{eq:lem:disc-regularity:pfeq1} are similarly bounded using~\eqref{eq:disc-norm-equiv}, \eqref{eq:disc-semi-smooth} and Assumption~\ref{assumption:regularity}\ref{assumption:regularity:X0}-\ref{assumption:regularity:F}. A discrete Gronwall lemma, \cite[Lemma~A.4]{K14}, completes the proof.
\end{proof}

In our main result, we show that noise interpolation results in an additional error $(h')^{r_2}$, with a rate that depends on Assumption~\ref{assumption:q} as well as the dimension $d$. We assume the triangulations $\cT_h$ and $\cT_{h'}$ to be quasiuniform, but see Remark~\ref{rem:quasi}.

\begin{theorem}
	\label{thm:main}
	Let $q$ satisfy Assumption~\ref{assumption:q} for some $\gamma > 0, \mu > 0, \psi > d/\mu$ with $$\mu_d := \mu - \max(d/2-1,d/\psi-1,0).$$ Let $X_0$ and $F$ satisfy Assumption~\ref{assumption:regularity}\ref{assumption:regularity:X0}-\ref{assumption:regularity:F} for $p \ge 2, r = r_1 \in [0,\min(\gamma,\theta))$ with $\theta$ as in \eqref{eq:sobolev_id_theta_def}, and let $G$ in \eqref{eq:G-def} satisfy \eqref{eq:g-lipschitz}. Suppose $\cT_h$ and $\cT_{h'}$ are quasiuniform and let $h \le C h'$ with $C < \infty$. For all $r_2 \in [0,2]\cap[0,\mu_d)$, there is a $C < \infty$ such that 
	\begin{equation*}
		\sup_{n \in \{1, \ldots, N_{\Delta t}\}} \norm[L^p(\Omega,H)]{X(t_n) - X_{h,\Delta t}^n} \le C (h^{1+r_1} + \Delta t^{1/2} + (h')^{r_2}) \quad  \Delta t, h, h' \in (0,1].
	\end{equation*}
\end{theorem}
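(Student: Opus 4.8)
The plan is to separate the noise‑interpolation error from the classical finite element error. Let $Y^n_{h,\Delta t}$ be the auxiliary approximation defined exactly by~\eqref{eq:spde-approximation} but with $I_h R_{\cS\to\cD}I_{h'}\Delta\tilde W^j$ replaced by $R_{\cS\to\cD}\Delta\tilde W^j$; this is precisely the fully discrete scheme analysed in~\cite[Section~3.6]{K14}. Under the hypotheses of the theorem, Proposition~\ref{prop:G-reg} (fed by Assumption~\ref{assumption:q}\ref{assumption:q-holder} and~\eqref{eq:g-lipschitz}) yields Assumption~\ref{assumption:regularity}\ref{assumption:regularity:G} for $r=r_1$, so together with the assumed Assumption~\ref{assumption:regularity}\ref{assumption:regularity:X0}--\ref{assumption:regularity:F} all hypotheses of~\cite{K14} are met and $\sup_{n}\norm[L^p(\Omega,H)]{X(t_n)-Y^n_{h,\Delta t}}\le C(h^{1+r_1}+\Delta t^{1/2})$. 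By the triangle inequality it remains to show $\sup_{n\in\{1,\ldots,N_{\Delta t}\}}\norm[L^p(\Omega,H)]{Y^n_{h,\Delta t}-X^n_{h,\Delta t}}\le C(h')^{r_2}$.

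Write $e^n:=Y^n_{h,\Delta t}-X^n_{h,\Delta t}$ and $T_{h,h'}:=R_{\cS\to\cD}-I_h R_{\cS\to\cD}I_{h'}$. Subtracting the closed‑form representations of $Y^n_{h,\Delta t}$ and $X^n_{h,\Delta t}$ and inserting $\pm\,S^{n-j}_{h,\Delta t}G(Y^j_{h,\Delta t})I_h R_{\cS\to\cD}I_{h'}\Delta\tilde W^j$ termwise gives $e^0=0$ and
\begin{equation*}
\begin{split}
e^n = {}&\Delta t\sum_{j=0}^{n-1}S^{n-j}_{h,\Delta t}\big(F(Y^j_{h,\Delta t})-F(X^j_{h,\Delta t})\big)\\
&{}+\sum_{j=0}^{n-1}S^{n-j}_{h,\Delta t}\big(G(Y^j_{h,\Delta t})-G(X^j_{h,\Delta t})\big)I_h R_{\cS\to\cD}I_{h'}\Delta\tilde W^j\\
&{}+\sum_{j=0}^{n-1}S^{n-j}_{h,\Delta t}G(Y^j_{h,\Delta t})T_{h,h'}\Delta\tilde W^j.
\end{split}
\end{equation*}
The first sum is a Gronwall‑type term: the $\dot H^{-1}$‑Lipschitz bound of Assumption~\ref{assumption:regularity}\ref{assumption:regularity:F} together with~\eqref{eq:disc-semi-smooth}--\eqref{eq:disc-norm-equiv} (with $r=1$) bounds it in $L^p(\Omega,H)$ by $C\Delta t\sum_{j<n}t_{n-j}^{-1/2}\norm[L^p(\Omega,H)]{e^j}$. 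The second sum is handled by the Burkholder--Davis--Gundy (BDG) inequality, $\norm[\cL(H)]{S^k_{h,\Delta t}}\lesssim1$, and Lemma~\ref{lem:hs-interpolant-bound} applied with $b:=g(Y^j_{h,\Delta t}(\cdot),\cdot)-g(X^j_{h,\Delta t}(\cdot),\cdot)$, which by~\eqref{eq:g-lipschitz} gives the bound $C(\Delta t\sum_{j<n}\norm[L^p(\Omega,H)]{e^j}^2)^{1/2}$, again of Gronwall type.

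The heart of the argument is the last sum, the genuine consistency term. By BDG it is $\lesssim(\Delta t\sum_{j<n}\norm[L^p(\Omega,\cL_2(H_q(\cS),H))]{S^{n-j}_{h,\Delta t}G(Y^j_{h,\Delta t})T_{h,h'}}^2)^{1/2}$, so, controlling $\norm[L^p(\Omega,\dot H^s)]{Y^j_{h,\Delta t}}$ for $s<1$ by the analogue of Lemma~\ref{lem:disc-regularity} for $Y_{h,\Delta t}$, it suffices to prove a deterministic estimate
\begin{equation*}
\norm[\cL_2(H_q(\cS),H)]{S^k_{h,\Delta t}G(v)T_{h,h'}}\le C\,t_k^{-s/2}\,(1+\norm[\dot H^s]{v})\,(h')^{r_2}
\end{equation*}
for some fixed $s\in(0,1)$, since then $\Delta t\sum_{j<n}t_{n-j}^{-s}(h')^{2r_2}\lesssim(h')^{2r_2}$. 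To get this I would factor $S^k_{h,\Delta t}=(A_h^{s/2}S^k_{h,\Delta t})A_h^{-s/2}\dot P_h$ and use~\eqref{eq:disc-semi-smooth}--\eqref{eq:disc-norm-equiv} to reduce to the Hilbert--Schmidt bound $\norm[\cL_2(H_q(\cS),\dot H^{-s})]{G(v)T_{h,h'}}\lesssim(1+\norm[\dot H^s]{v})(h')^{r_2}$, then split $T_{h,h'}=(I-I_h)R_{\cS\to\cD}+I_h R_{\cS\to\cD}(I-I_{h'})$ and on each piece use the embedding $H_q(\cS)\hookrightarrow W^{\mu,\psi}(\cS)$ of Assumption~\ref{assumption:q}\ref{assumption:q-sobolev} (and its restriction to $\cD$), the generalized interpolant error bound Proposition~\ref{prop:interpolation} to estimate the interpolation errors in fractional $W^{\rho,\psi}$‑norms at rate $h^{\min(\mu-\rho,2)}$ (choosing $\rho$ with $\rho\psi>d$ so that $I_h,I_{h'}$ act on the relevant functions), the hypothesis $h\le Ch'$, and the linear growth~\eqref{eq:g-lipschitz} of $g$, which makes multiplication by $g(v,\cdot)$ bounded from $L^\infty$ into $L^2$ with norm $\lesssim1+\norm{v}$.

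I expect the main obstacle to be exactly this Hilbert--Schmidt interpolation‑error estimate and the sharpness of its exponent. The difficulty is that $H_q(\cS)$ is Hilbert--Schmidt embedded only into $L^2(\cS)$ (Remark~\ref{rem:Q-Wiener}), while the smallness of the interpolation error is only available in the $L^\psi$‑scale space $W^{\mu,\psi}(\cS)$ from Assumption~\ref{assumption:q}\ref{assumption:q-sobolev}; the Hilbert--Schmidt summability must therefore be extracted by interpolating these two facts through the entropy numbers of the Sobolev embeddings $W^{\mu,\psi}(\cS)\hookrightarrow W^{\rho,2}(\cS)$, whose polynomial decay turns (via Carl‑type inequalities, valid since these ultimately become maps of Hilbert spaces) into $\ell^2$‑summable singular values. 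Balancing the smoothing parameter $s\to1^-$, the regularity budget $\rho$, and the summability requirement is precisely what produces the loss $\max(d/2-1,d/\psi-1,0)$ and hence the rate $\mu_d$. Finally, collecting the three bounds for $e^n$ and applying a discrete Gronwall lemma (\cite[Lemma~A.4]{K14}, after squaring and absorbing the weakly singular kernel $t^{-1/2}$) gives $\sup_n\norm[L^p(\Omega,H)]{e^n}\le C(h')^{r_2}$, which together with the~\cite{K14} estimate for $X-Y_{h,\Delta t}$ completes the proof.
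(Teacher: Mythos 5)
Your outer decomposition is genuinely different from the paper's, and both are sound. You introduce the auxiliary non-interpolated scheme $Y^n_{h,\Delta t}$, invoke \cite[Theorem~3.14]{K14} for $X-Y_{h,\Delta t}$, and then run a full stability (discrete Gronwall) argument on $e^n=Y^n_{h,\Delta t}-X^n_{h,\Delta t}$ with two Lipschitz terms and one consistency term. The paper instead adds and subtracts only the single stochastic convolution $\mathrm{I}^n:=\sum_{j<n}S^{n-j}_{h,\Delta t}G(X^j_{h,\Delta t})(R_{\cS\to\cD}-I_hR_{\cS\to\cD}I_{h'})\Delta\tilde W^j$, observes that $X(t_n)-X^n_{h,\Delta t}+\mathrm{I}^n$ has exactly the structure of the error already analyzed in \cite{K14} (the nonlinearities are still evaluated along the computed trajectory, which Lemma~\ref{lem:disc-regularity} controls), and therefore needs no new Gronwall iteration --- only a bound on $\mathrm{I}^n$. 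Your route costs the two extra Lipschitz/Gronwall terms, which you handle correctly via Assumption~\ref{assumption:regularity}\ref{assumption:regularity:F}, Lemma~\ref{lem:hs-interpolant-bound} and BDG, but it isolates the consistency term cleanly; the paper's version is shorter because it reuses \cite{K14} verbatim. For the consistency term you identify precisely the paper's mechanism: factor through $H_q(\cS)\hookrightarrow W^{\mu,\psi}(\cS)$, estimate the interpolation errors in fractional norms by Proposition~\ref{prop:interpolation} using $h\le Ch'$, and recover Hilbert--Schmidt summability from entropy numbers of Sobolev embeddings via Carl-type inequalities, with the loss $\max(d/2-1,d/\psi-1,0)$ arising from distributing the $\ell^2$-summability between the parabolic smoothing and the embedding. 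Two points still need to be pinned down. First, you assert rather than carry out the balancing that produces the precise threshold $\mu_d$; in the paper this is the computation around \eqref{eq:thm:main:pf:step1}--\eqref{eq:thm:main:pf:step3}, where letting $\alpha\to d$ and $\delta\to d$ yields $d/2-1$ and $d/\psi-1$ respectively (with a separate argument in $d=1$). Second, your intermediate space $L^\infty$ for the multiplication bound $\norm[\cL(L^\infty,L^2)]{G(v)}\lesssim 1+\norm{v}$ is problematic: Proposition~\ref{prop:interpolation} excludes $p=\infty$, so you would have to reach $L^\infty(\cD)$ through an embedding $W^{r_2',\delta}\hookrightarrow L^\infty$ with $r_2'\delta>d$, which costs rate; the paper instead routes through $L^\beta$ with $d<\beta<\infty$ and $\delta\downarrow d$, at the price of needing $\norm[\dot{H}^r]{X^j_{h,\Delta t}}$ for $r\to 1$ and a Sobolev embedding to control $\norm[\cL(L^\beta,H)]{G(X^j_{h,\Delta t})}$. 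With these two gaps filled, your argument delivers the stated rate.
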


\begin{proof}[Proof of Theorem~\ref{thm:main}]
	Note, that if $\mu_d \le 0$, which can only occur in $d=3$, the error is simply bounded by regularity of~\eqref{eq:mild-solution} and Lemma~\ref{lem:disc-regularity}. Below, we let $\mu_d > 0$.
	
	We make the split $X(t_n) - X_{h,\Delta t}^n = \mathrm{\cA}^n + \mathrm{\cN}^n$, where
	\begin{align*}
		\cA^n :=& \cA_1^n + \cA_2^n + \cA_3^n \\
		:=& S(t_n) X_0 - S^n_{h,\Delta t}  X_0 \\
		\quad&+ \sum^{n-1}_{j=0} \int_{t_j}^{t_{j+1}} S(t_n - s) F(X(t_j)) - S^{n-j}_{h,\Delta t}  F(X^{j}_{h,\Delta t}) \dd s \\
		\quad&+ \sum^{n-1}_{j=0} \int_{t_j}^{t_{j+1}} \Big(S(t_n - s) G(X(t_j)) - S^{n-j}_{h,\Delta t}  G(X_{h,\Delta t}^{j})\Big) R_{\cS \to \cD} \dd \tilde W(s)
	\end{align*}
	is the SPDE approximation error and 
	\begin{equation*}
		\mathrm{\cN}^n := \sum^{n-1}_{j=0} S^{n-j}_{h,\Delta t}  G(X_{h,\Delta t}^{j}) (R_{\cS \to \cD} - I_h R_{\cS \to \cD} I_{h'}) \Delta \tilde W^j
	\end{equation*}
	is the noise discretization error. The latter term is the main focus of this proof. The SPDE approximation error $\cA^n$ is treated in the exact same way as in \cite[Theorem~3.14]{K14}, wherefore we only sketch the main arguments. 
	
	The term $\cA_1^n$ is handled by making use of \eqref{eq:disc-semi-error}. One also requires regularity of $X_0$, which is Assumption~\ref{assumption:regularity}\ref{assumption:regularity:X0}. This results in the term being bounded by a constant times $h^{1+r_1} + \Delta t^{(1+r_1)/2}$. For terms $\cA_2^n$ and $\cA_3^n$, one splits up the integrals and makes use of \eqref{eq:disc-semi-error} along with similar bounds. The term $\cA_3^n$ in particular is treated by an integrated version of the bound (see \cite[Lemma 3.13]{K14}), which requires stability of $\dot{P}_h$ in $\dot{H}^1$. This is \cite[Assumption~3.5]{K14}, which is satisfied by quasiuniformity of $\cT_h$. Bounding these terms requires Lipschitz regularity of $F$ and $G$, which is Assumption~\ref{assumption:regularity}\ref{assumption:regularity:F}-\ref{assumption:regularity:G} (confirmed through Propositions~\ref{prop:F-reg} and \ref{prop:G-reg}). One also makes use of \eqref{eq:disc-semi-smooth}, spatial and temporal regularity of the solution $X$ to~\eqref{eq:spde} (see \cite[Theorems 2.27 and 2.31]{K14}) and for $\cA_3^n$ also \cite[Theorem~4.37]{DPZ14}. Altogether this results in the two terms being bounded by a constant times $h^{1+r_1} + \Delta t^{1/2}$ and a sum which can be handled by a discrete Gronwall inequality (see~\cite[Lemma~A.4]{K14}). Here the restriction on the temporal rate comes from the H\"older regularity of $X$ and only applies in the multiplicative noise setting. We refer to \cite{K14} for the complete argument for $\cA^n$. 
	
	For the bound on $\mathrm{\cN}^n$, we make use of the entropy numbers introduced in Section~\ref{sec:operator-theory}.  
	Specifically, after also applying \cite[Theorem~4.37]{DPZ14}, \eqref{eq:HS-entropy} implies that
	\begin{equation*}
		\norm[L^p(\Omega,H)]{\mathrm{\cN}^n}^2 \lesssim \Delta t \sum^{n-1}_{j=0} \E\Big[\norm[\cE^2(H_q(\cS),H)]{S^{n-j}_{h,\Delta t}  G(X_{h,\Delta t}^{j}) \big(R_{\cS \to \cD} - I_h R_{\cS \to \cD} I_{h'}\big)}^p\Big]^{\frac{2}{p}}.
	\end{equation*}
	This splits into $\norm[L^p(\Omega,H)]{\cN^n}^2 \lesssim \norm[L^p(\Omega,H)]{\cN_1^n}^2 + \norm[L^p(\Omega,H)]{\cN_2^n}^2 + \norm[L^p(\Omega,H)]{\cN_3^n}^2$ by 
	\begin{equation*}
		R_{\cS \to \cD} - I_h R_{\cS \to \cD} I_{h'} = R_{\cS \to \cD} (I - I_{h'}) + (I-I_h) R_{\cS \to \cD} - (I-I_h) R_{\cS \to \cD} (I - I_{h'}).
	\end{equation*}
	The three terms are treated similarly. We only consider the most complex case,
	\begin{equation*}
		\norm[L^p(\Omega,H)]{\cN_3^n}^2 := \Delta t \sum^{n-1}_{j=0} \E\Big[\norm[\cE^2(H_q(\cS),H)]{S^{n-j}_{h,\Delta t}  G(X_{h,\Delta t}^{j}) \big((I-I_h) R_{\cS \to \cD} (I - I_{h'})\big)}^p\Big]^{\frac{2}{p}}.
	\end{equation*}
	We start by assuming $d>1$, and derive three preliminary bounds. 
	
	First, commutativity of $S^{n-j}_{h,\Delta t}$ and $A_h$, \eqref{eq:entropy-holder}, \eqref{eq:disc-norm-equiv} and~\eqref{eq:sobolev-entropy-decay} yield for $\alpha > d$
	\begin{equation}
		\label{eq:thm:main:pf:step1}
		\begin{split}
			&\Delta t \sum^{n-1}_{j=0} \norm[\cE^\alpha(H,H)]{S^{n-j}_{h,\Delta t}}^2 \\
			&\quad= \Delta t \sum^{n-1}_{j=0} \norm[\cE^\alpha(H,H)]{A_h^\frac{r}{2} S^{n-j}_{h,\Delta t}A_h^{-\frac{r}{2}}}^2 \\
			&\quad\le \Delta t \sum^{n-1}_{j=0} t_{n-j}^{-r}  \norm[\cL]{I_{H^{r-r'} \hookrightarrow H}}^2  \norm[\cE^\alpha]{I_{H^r \hookrightarrow H^{r-r'}}}^2
			\norm[\cL]{I_{\dot{H}^r \hookrightarrow H^r}}^2 
			\norm[\cL(H,\dot{H}^r)]{A_h^{-\frac{r}{2}}\dot{P}_h}^2  \\ 
			&\quad\lesssim \norm[\cE^\alpha]{I_{H^r \hookrightarrow H^{r-r'}}}^2 \le C,
		\end{split}
	\end{equation}
	where $C < \infty$ is independent of $h, \Delta t$, by choosing $r' < r$ sufficiently close to $1$. Second, by applying the H\"older inequality, a Sobolev embedding \cite[Theorem~3.8]{BH21} and the Lipschitz continuity~\eqref{eq:g-lipschitz} in the norm~\eqref{eq:sobolev-slobodeckij} for $u \in \dot{H}^r, v \in L^\beta(\cD)$, 
	\begin{equation*}
		\norm[H]{G(u)v} \lesssim \norm[L^\rho(\cD)]{g(u)}\norm[L^\beta(\cD)]{v} \lesssim \norm[H^{r}]{g(u)}\norm[L^\beta(\cD)]{v} \lesssim \norm[\dot{H}^{r}]{u}\norm[L^\beta(\cD)]{v}, 
	\end{equation*}
	for $r \to 1$, with $\rho < \infty$ in $d=2$ and $\rho < 6$ in $d=3$, hence $\beta > d$. Thus, by Lemma~\ref{lem:disc-regularity},
	\begin{equation}
		\label{eq:thm:main:pf:step2}
		\sup_{j,\Delta t, h, h'}\norm[L^p(\Omega,\cL(L^\beta,H))]{G(X_{h,\Delta t}^{j})} \lesssim \sup_{j,\Delta t, h, h'} \norm[L^p(\Omega,\dot{H}^r)]{X_{h,\Delta t}^{j}} \lesssim 1, \quad \beta > d.
	\end{equation}
	Third, if $r_2 \in [1 + 1/\delta,2]$, Proposition~\ref{prop:interpolation} and $h \le C h'$ yield for small $\epsilon > 0$
	\begin{equation}
		\label{eq:thm:main:pf:step3}
		\begin{split}
			&\norm[\cL(W^{r_2,\delta}(\cS),L^\beta(\cD))]{(I-I_h) R_{\cS \to \cD} (I - I_{h'})} \\
			&\quad\le \norm[\cL]{I_{L^\delta \hookrightarrow L^\beta}} \norm[\cL(W^{1+1/\delta - \epsilon,\delta},L^\delta)]{I-I_h} \\
			&\hspace{2em} \times \norm[\cL(W^{1+1/\delta - \epsilon,\delta}(\cS),W^{1+1/\delta - \epsilon,\delta}(\cD))]{R_{\cS \to \cD}} \norm[\cL(W^{r_2,\delta},W^{1+1/\delta - \epsilon,\delta})]{I-I_{h'}} \\ &\quad\lesssim h^{1+1/\delta - \epsilon} (h')^{r_2 - 1 - 1/\delta + \epsilon} \lesssim (h')^{r_2}, \quad \delta \ge \beta > d.
		\end{split}
	\end{equation}
	This is also obtained for $r_2 \in (d/\delta,1 + 1/\delta)$ by instead using $\norm[\cL(W^{r_2,\delta})]{I-I_{h'}} \lesssim 1$.
	
	We now use \eqref{eq:entropy-holder}, \eqref{eq:thm:main:pf:step1}, \eqref{eq:thm:main:pf:step2} and \eqref{eq:thm:main:pf:step3} to bound~$\norm[L^p(\Omega,H)]{\cN_3^n}^2$ by 
	\begin{align*}
		&\Delta t \sum^{n-1}_{j=0} \norm[\cE^\alpha(H,H)]{S^{n-j}_{h,\Delta t}}^2 \norm[L^p(\Omega,\cL(L^\beta,H))]{G(X_{h,\Delta t}^{j})}^2 \norm[\cL(W^{r_2,\delta},L^\beta)]{(I-I_h) R_{\cS \to \cD} (I - I_{h'})}^2 \\ &\hspace{3em}\times\norm[\cE^{\frac{2\alpha}{\alpha-2}}]{I_{W^{\mu,\psi} \hookrightarrow W^{r_2,\delta}}}^2 \norm[\cL]{I_{H_q \hookrightarrow W^{\mu,\psi}}}^2 \lesssim \norm[\cE^{\frac{2\alpha}{\alpha-2}}]{I_{W^{\mu,\psi} \hookrightarrow W^{r_2,\delta}}}^2 (h')^{2 r_2}.
	\end{align*}
	Without loss of generality, let $\mu,r_2 \notin \N$. Then we can apply~\eqref{eq:sobolev-entropy-decay} if $r_2 < \mu - \max(d/\psi - d/\delta,0)$, which is true by assumption for sufficiently small $\delta \ge \beta > d$. Moreover, $\norm[\cE^{\frac{2\alpha}{\alpha-2}}]{I_{W^{\mu,\psi} \hookrightarrow W^{r_2,\delta}}} < \infty$ if $r_2 < \mu - d(\alpha-2)/2\alpha$, which holds if we let $\alpha \to d$. This completes the proof for $d > 1$. The case $d=1$ follows by similar arguments, except we need, for small $\epsilon > 0$, use the fact that $L^1(\cD) \hookrightarrow \dot{H}^{-1/2-\epsilon}$ when deriving~\eqref{eq:thm:main:pf:step2} and replace~\eqref{eq:thm:main:pf:step1} with a bound on $\norm[\cE^\alpha(\dot{H}^{-1/2-\epsilon},H)]{S^{n-j}_{h,\Delta t}}$ with $\alpha > 2$. 
\end{proof}

\begin{remark}
	\label{rem:quasi}
	By Proposition~\ref{prop:interpolation}, quasiuniformity on $\cT_{h'}$ can be replaced by regularity for $\mu_d > 1$. The same holds for $\cT_h$ if we also replace $r_1$ by $r_1 - \epsilon$, $\epsilon > 0$, using~\cite[Lemma~3.12(ii)]{K14} in place of~\cite[Lemma~3.13(i)]{K14} in~\cite[Theorem~3.14]{K14}.
\end{remark}

\section{Example kernels and numerical simulation}
\label{sec:simulations}

In this section we explore the implications of Theorem~\ref{thm:main} for several common covariance kernels. We illustrate our discussion with numerical simulations in FEniCS.

\subsection{Example kernels} We first examine for which $\gamma \in (0,1]$, $ \mu > 0 $ and $\psi > d/\mu$ Assumption~\ref{assumption:q} is satisfied for a given kernel $q$.

\begin{example}[Mat\'ern kernel]
	\label{ex:matern}
	The stationary Mat\'ern kernel is defined as
	\begin{equation*}
		q(x) := \sigma^2 2^{1-\nu}/\Gamma(\nu) ({\sqrt{2\nu}|x|}/{\rho})^\nu K_\nu(\sqrt{2\nu}|x|/\rho), \quad x \in \R^d.
	\end{equation*} 
	Here $K_\nu$ is the modified Bessel function of the second kind with smoothness parameter $\nu > 0$ and $\Gamma$ is the Gamma function. It generalizes the kernel $q_r$ of Section~\ref{sec:fractional-sobolev-spaces} by scaling with $\sigma, \rho > 0$. There, we had $H_{q} = H^{r}$ where now $r =\nu + d/2$ in terms of $\nu$, with equivalent norms. This relies on $\hat{q}$ being bounded from above and below by $\xi \mapsto C (1 + |\xi|^2)^{-r}$ for some $C \in (0,\infty)$. It is the upper bound that yields $H_{q} \hookrightarrow H^{r}$. This does not change by scaling $\sigma$ and $\rho$, and neither does the H\"older property \eqref{eq:matern-kernel-holder}. Therefore Assumption~\ref{assumption:q} is satisfied with $\psi=2, \mu = \nu + d/2$ and $\gamma = \nu$ for $\nu \in (0,1)$, $\gamma < 1$ for $\nu = 1$ and $\gamma = 1$ for $\nu > 1$.	The special case $\nu = 1/2$ results in the exponential kernel $q(x) = \sigma^2 \exp(-|x|/\rho), x \in \R^d$, see \cite[Example~7.17]{LPS14}.
\end{example}

\begin{example}[Gaussian kernel]
	\label{ex:gaussian}
	The stationary kernel $q(x) := \sigma^2 \exp(-(|x|/\rho)^2)$, for $x \in \R^d, \sigma, \rho > 0,$ fulfills Assumption~\ref{assumption:q} with $\gamma = 2$ ,$\psi = 2$ and any $\mu > d/2$. The latter follows from $\hat{q}(\xi) = \sigma^2(2\rho)^{-2} \exp(-\rho^2|\xi|^2/4), \xi \in \R^d$, cf.\ Example~\ref{ex:matern}.
\end{example}

\begin{example}[Polynomial kernels with compact support]
	\label{ex:polynomial}
	Let $p \colon \R \to \R$ be a polynomial and let the stationary kernel $q$ be of the form
	\begin{equation*}
		q(x) := \begin{cases}
			p(|x|) \text{ if } |x| \in [0,c], \\
			0 \text{ otherwise,} 
		\end{cases} \quad x \in \R^d.
	\end{equation*}
	Not all $p$ yield positive semidefinite $q$. We consider two admissible examples for which one can use Proposition~5.5 and Theorem~5.26 in \cite{W04} (to which we refer for more examples) to compute and then bound $\hat{q}$ by a constant in a neighbourhood of $0$. Without loss of generality we let $c = 1$.
	
	First, \cite[Theorem~6.20]{W04} shows that $p(x) := (1-x)^2$ yields a positive definite $q$ . Assumption~\ref{assumption:q}\ref{assumption:q-holder} holds with $\gamma = 1/2$ as $p(0) - p(x) = 2x - x^2$ in $[0,1]$. Moreover, by \cite[Lemmas~10.31, 10.34]{W04}, $\hat{q}$ is bounded by $\xi \mapsto C|\xi|^{-(d+1)}$ for some $C < \infty$. Thus, Assumption~\ref{assumption:q}\ref{assumption:q-sobolev} holds for $\psi = 2$ and $\mu = 1/2+d/2$.	
	
	Second, letting $p(x) := (1-x)^4(4x+1)$ yields a positive definite $q$ by \cite[Theorem~9.13]{W04}. Assumption~\ref{assumption:q}\ref{assumption:q-holder} holds with $\gamma = 1$ due to the derivative properties $p'(0) = 0$ and $p''$  bounded in $[0,1]$. Moreover, \cite[Theorem~10.35]{W04} for $d\ge 2$ and a direct calculation for $d=1$ bounds $\hat{q}$ by $\xi \mapsto C|\xi|^{-(d+3)}$, so Assumption~\ref{assumption:q}\ref{assumption:q-sobolev} holds with $\psi = 2,\mu = 3/2+d/2$.
\end{example}

The next example shows why we cannot always have $\psi = 2$ in Assumption~\ref{assumption:q}.

\begin{example}[Factorizable kernels]
	\label{ex:factorizable-matern}
	A kernel $q$ is called factorizable if it is a product $q(x,y) := \prod_{j=1}^d q_j(x_j,y_j)$, $x = (x_1,\ldots,x_d), y = (y_1,\ldots,y_d) \in \R^d, d\ge 2$, of univariate kernels. Let $q_1 := \cdots = q_d := \tilde{q}$ be Matérn kernels, all with $\nu > 0$, so that $q$ is stationary. From equivalence of norms in $\R^d$ and the equality
	\begin{equation*}
		q(0)-q(x-y) = \tilde q(0)^d - \prod_{j=1}^d \tilde q(x_j-y_j) = \sum_{j=1}^d \Big( \tilde q(0)^{d-j} \prod_{i=1}^{j-1} \tilde q(x_i-y_i)\Big) (\tilde q(0)-\tilde q(x_j-y_j)) 
	\end{equation*}
	it follows that $\gamma = \nu$ in Assumption~\ref{assumption:q}\ref{assumption:q-holder}. From \cite[Example~7.12]{LPS14}, $\hat{q}$ is given by
	\begin{equation*}
		\hat{q}(\xi) = C \prod_{j=1}^d (1+|\xi_j|^2) \lesssim (1+|\xi|^2)^{-\nu - 1/2}, \quad \xi = (\xi_1, \ldots, \xi_d) \in \R^d, C \in (0,\infty).
	\end{equation*}
	Using this result in~\cite[Theorem~10.12]{W04}, it can be seen that $H_q \hookrightarrow H^{1/2 + \nu}$. If $\nu \le 1/2$, this embedding does not satisfy Assumption~\ref{assumption:q}\ref{assumption:q-sobolev} with $\psi=2$ since $1 + 2 \nu \le d$. However, we can find $\mu < \nu + 1/2$ and $\psi > 2$ such that $\mu \psi > d$. We let the factorizable exponential kernel $q(x,y) = \sigma^2 \exp(-(|x_1-y_1|+|x_2-y_2|)/\rho)$ in $d=2$ illustrate this. Here, $\nu = 1/2$ so $H_q \hookrightarrow H^1$. By the reproducing property, we find that for $x,y \in \cS$,
	\begin{equation*}
		|v(x) - v(y)|^2 = |\inpro[H_q]{v}{q(x,\cdot)-q(y,\cdot)}|^2 \le \norm[H_q]{v}^2 2(q(0)-q(x-y)) \lesssim \norm[H_q]{v}^2 |x-y|.
	\end{equation*}
	This implies that $H_q \hookrightarrow W^{(1-\epsilon)/2,p'}$ for all $p' > 1$ and $\epsilon > 0$ by definition of $\norm[W^{(1-\epsilon)/2,p'}]{\cdot}$. Combining these two embeddings, there is for all $\mu \in (1/2,1)$ a $\psi > 2$ such that $\mu \psi > 2$ and $H_q \hookrightarrow W^{\mu,\psi}$, satisfying Assumption~\ref{assumption:q}\ref{assumption:q-sobolev}. To see this, let $p' > 4$ and pick $\epsilon > 0$ so small that $\mu \in ((1-\epsilon)/2,1-\epsilon)$. Then $W^{\mu,\psi} = [W^{(1-\epsilon)/2,p'},W^{1-\epsilon,2}]_\theta \hookleftarrow H_q$ in terms of complex interpolation with $\psi = 2p'/(2(1-\theta)+p'\theta)$ and $\theta = 2 \mu/(1-\epsilon) - 1 \in (0,1)$, so that $\mu \psi > 2$ if we let $\epsilon < (p'-4)(1-\mu)/(p'(\mu+1)-4)$. 
\end{example}

Lastly, we show how the classical setting of~\cite[Section~4]{JR12} fits in Assumption~\ref{assumption:q}.

\begin{example}[Kernel with known Mercer expansion]
	\label{ex:kernel-basis}
	Suppose there is an ONB $(f_j)_{j=1}^\infty$ of $L^2(\cS)$ of uniformly bounded continuous functions, with, for some $r \in (0,1]$,
	\begin{equation*}
		\sum_{j=1}^{\infty} \mu_j \sup_{x \neq y \in \cS} \frac{(f_j(x) - f_j(y))^2}{|x-y|^{2r}} < \infty \quad \text{ with } \mu_j\ge0, j \in \N, \text{ and } (\mu_j)_{j=1}^\infty \in \ell^1.
	\end{equation*}
	Let the kernel $q$ be given by $q(x,y) := \sum_{j=1}^\infty \mu_j f_j(x) f_j(y)$ for $x,y \in \cS$ and define a Hilbert space $H_q$ by
	\begin{equation*}
		H_q := \Big\{v \in L^2(\cS): \sum_{j=1}^\infty \mu_j^{-1} |\inpro[L^2]{v}{f_j}|^2  < \infty \Big\}, \quad \inpro[H_q]{u}{v} := \sum^\infty_{j=1} \mu_j^{-1} \inpro[L^2]{v}{f_j}\inpro[L^2]{u}{f_j}
	\end{equation*}
	for $u,v \in H_q$, under the notational convention $0/0=0$. Since $q(x,\cdot) \in H_q$ and $v(x)=\inpro[H_q]{v}{q(x,\cdot)}$ for $v \in H_q, x \in \cD$, $H_q$ must be the RKHS for $q$ on $\cS$. We have
	\begin{equation*}
		\frac{q(x,x) - 2q(x,y) + q(y,y)}{|x-y|^{2r}} = \sum_{j=1}^{\infty} \mu_j \frac{(f_j(x) - f_j(y))^2}{|x-y|^{2r}} \le \sum_{j=1}^{\infty} \mu_j \sup_{x \neq y \in \cS} \frac{(f_j(x) - f_j(y))^2}{|x-y|^{2r}},
	\end{equation*}
	meaning that Assumption~\ref{assumption:q}\ref{assumption:q-holder} holds for $r = \gamma$. As in Example~\ref{ex:factorizable-matern}, the reproducing property yields that $\mu \in (0,r] \cap (0,1)$ and  $\psi > d/\mu$ in Assumption~\ref{assumption:q}\ref{assumption:q-sobolev}.
\end{example}

\subsection{Numerical simulations}
We approximate the strong error~\eqref{eq:intro-strong-error} with $T = 1$, $p=2$, $\cD$ a regular dodecagon centered at $(0.5,0.5)$ with radius $0.5$ and $\cS$ the unit square. We let $A := 10^{-2}(-\Delta + 1)$ with either Dirichlet or Neumann zero boundary conditions. We set $G$ and $F$ according to \eqref{eq:G-def} and \eqref{eq:F1-F2-def}, picking different functions $b, f$, and $g$. We fix $\Delta t = 10^{-3}$ and let $h = 2^{-1}, \ldots, 2^{-5}$ with $h' = h$ (except in Example~\ref{ex:numerical-diffgrid}). The time step size and the scaling of $A$ has been chosen so that, when plotting the error versus $h$ in a log-log scale, we see the asymptotic spatial convergence rate appear. This comes from either the SPDE approximation error or the noise discretization error depending on which one dominates (whether $r_1 + 1 \le r_2$ or not). We write $r \epel s$, $r,s \in \R$, to denote $r = s - \epsilon$ for arbitrary $\epsilon > 0$.

We compute a Monte Carlo approximation of the strong error, given by
\begin{equation*}
	\sup_{j \in \{1, \ldots, N_{\Delta t}\}} \left( \frac{1}{M} \sum_{i=1}^M \norm[H]{X_{h,\Delta t,(i)}^j - X_{(i)}(t_j)}^2\right)^{\frac 1 2}
\end{equation*}
for $M=80$ independent samples $X_{h,\Delta t,(i)}$ and $X_{(i)}$ of $X_{h,\Delta t}$ and $X$. In practice we replace $X$ by a reference solution $X_{h,\Delta t}$ with $h = 2^{-7}$. This sample size choice does not yield a precise estimate of~\eqref{eq:intro-strong-error}. However, our interest is only in observing the strong convergence rate. For this, the Monte Carlo error causes no problems since it is itself bounded by a constant times the strong error, for fixed $t_j$. This follows from the $1/2$-H\"older continuity of the square root and the triangle inequality via
\begin{align*}
	&\E\Big[\Big| \Big( \frac{1}{M} \sum_{i = 1}^M \norm[H]{X_{h,\Delta t,(i)}^j - X_{(i)}(t_j)}^2\Big)^{\frac 1 2} - \E[\norm[H]{X_{h,\Delta t}^j - X(t_j)}^2]^{\frac 1 2} \Big|^2\Big] \\
	&\quad\le \E\Big[\Big| \frac{1}{M} \sum_{i = 1}^M \big(\norm[H]{X_{h,\Delta t,(i)}^j - X_{(i)}(t_j)}^2 - \E[\norm[H]{X_{h,\Delta t}^j - X(t_j)}^2]\big) \Big|\Big] \\
	&\quad\le 2 \E[\norm[H]{X_{h,\Delta t}^j - X(t_j)}^2].
\end{align*}
We also use the same $M$ samples of $\tilde W$ across all values of $h$ to further decrease the influence of the Monte Carlo error.

The simulations are performed with the FEniCS software package (see~\cite{LL16}) on a laptop with an Intel\textsuperscript{\tiny\textregistered} Core\textsuperscript{\tiny\texttrademark} i7-10510U processor, with $\tilde W$ sampled by circulant embedding \cite{GKNSS18}. The code accompanies the arXiv version of the paper.

\begin{figure}
	\centering		
	\subfigure[Example~\ref{ex:numerical-matern}, Matérn kernel.\label{subfig:matern}]{\includegraphics[width = 0.32\textwidth]{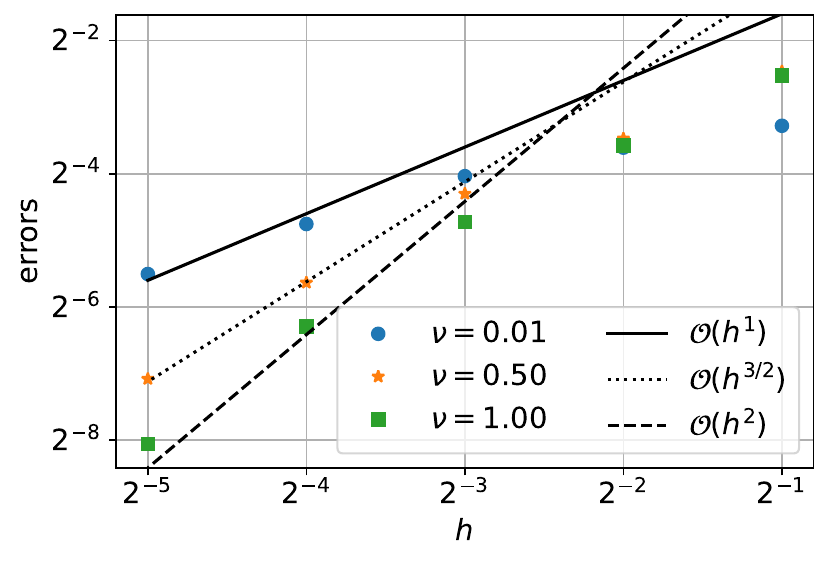}}
	\subfigure[Example~\ref{ex:numerical-sepexp}, two exponential type covariances. \label{subfig:sepexp}]{\includegraphics[width = 0.32\textwidth]{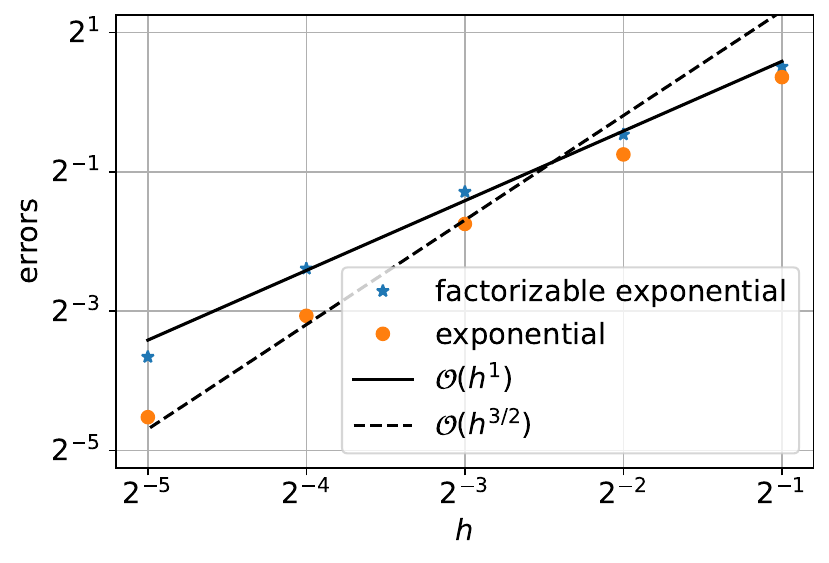}}
	\subfigure[Example~\ref{ex:numerical-diffgrid}, polynomial kernel with different resolutions of $\cT_{h'}$. \label{subfig:diffgrid}]{\includegraphics[width = 0.32\textwidth]{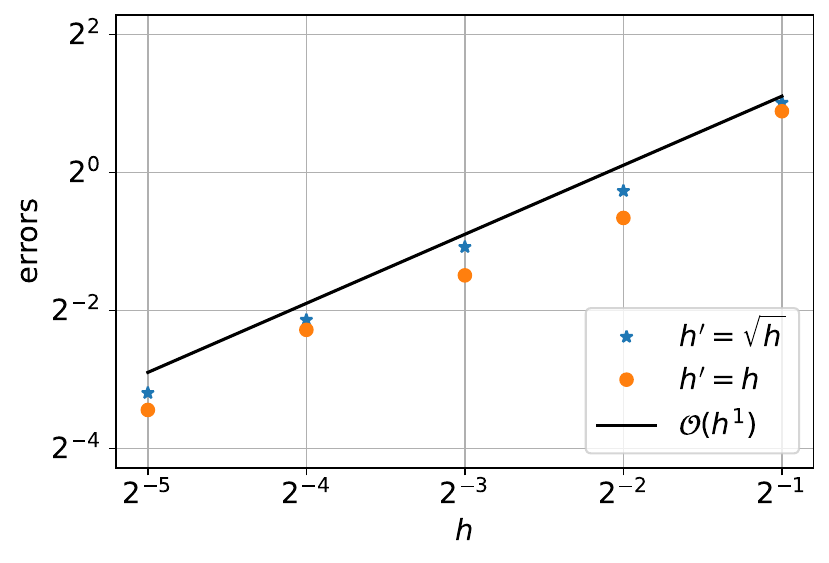}}
	\caption{Strong error approximations for $X_{h,\Delta t}$.}
\end{figure} 

\begin{example}[Non-dominance of noise discretization errors]
	\label{ex:numerical-matern}	
	In the case that $q$ is a Matérn kernel as in Example~\ref{ex:matern}, we have $r_1 + 1 \le r_2 \epel \min(2,1 + \nu)$ for $d \in \{2, 3\}$, i.e., the noise discretization error does not dominate. This is true also for the Gaussian kernel of Example~\ref{ex:gaussian} and any $d$. It is also true for the first polynomial kernel of Example~\ref{ex:polynomial} when $d \in \{2, 3\}$ (since then $r_1 +1 = r_2 \epel 3/2$) and the second for any $d$ (since then $r_1 + 1 \le r_2 \epel 2$). 
	
	We illustrate this in the simulation setting outlined above with $q$ of Matérn type with parameters $\rho = 0.25, \sigma^2 = 10$ and $\nu \in \{0.01,0.5,1.0\}$. We let $X_0 = b(x) = 0$, pick Lipschitz functions $f(u,x) = f(u) = 10^{-1} + u/(|u|+1)$, $g(u,x) = g(u) = u/(|u|+1)$ for $u \in \R, x \in \cD$, set $h' = h$ and apply Neumann boundary conditions. The errors of Figure~\ref{subfig:matern} are in line with the results of Theorem~\ref{thm:main}. See Figure~\ref{subfig:realization} for a realization of $X_{h,\Delta t}$ with $\nu = 0.5, h = 2^{-7}$ and $\Delta t = 10^{-3}$.
\end{example}

\begin{example}[Dominance of noise discretization errors]
	\label{ex:numerical-sepexp}
	In $d=1$, the Matérn kernels yield $r_1 + 1 \epel \min(\nu,\theta,1) +1$ and $r_2 \epel \min(\nu + 1/2,2)$, with dominant noise discretization error if $\nu \le 1$ for Dirichlet and $\nu \le 3/2$ for Neumann conditions. Similar conclusions hold for the first polynomial kernel of Example~\ref{ex:polynomial}. This was demonstrated in a linear setting in~\cite{BDLP22}, with different rates owing to errors being measured pointwise in space. Numerical experiments (not included in~\cite{BDLP22}) suggest this applies in our $L^2$ setting as well.
	
	Factorizable kernels (Example~\ref{ex:factorizable-matern}) may exhibit dominant noise discretization errors even when $d > 1$. We illustrate this with a numerical simulation comparing the exponential and the factorizable exponential kernel, both with $\rho = 0.25$ and $\sigma^2 = 10$. We set $f(u) = 10^{-1}$, $g(u) = 1$ for $u \in \R$, $b(x) = 10^{-1}(1,1)$ for $x \in \cD$, equip $A$ with Dirichlet boundary conditions and otherwise follow the setting of Example~\ref{ex:numerical-matern}. From Examples~\ref{ex:matern} and~\ref{ex:factorizable-matern} we see that $r_1 + 1 \epel 3/2$ for both the exponential and the factorizable exponential kernel, whereas $r_2 \epel 3/2$ in the former case and $r_2 \epel 1$ in the latter. The results (Figure~\ref{subfig:sepexp}) align with Theorem~\ref{thm:main}.
	
	One can potentially mitigate these issues by computing $I_h R_{\cS \to \cD} I_{h'} \Delta \tilde W^j$ using finer meshes on $\cS$ and $\cD$ than the mesh used in~\eqref{eq:spde-approximation}. Efficient sampling methods could still maintain competitiveness of the approximation schemes. However, as FEniCS does not appear to support this, we do not explore this further.
\end{example} 

\begin{example}[Dominance of SPDE approximation error and noise sampling on a coarse mesh]
	\label{ex:numerical-diffgrid}
	In our last example, we consider a non-smooth initial condition, namely $X_0(x) = 3(-\log((x_1-0.5)^2 + (x_2-0.5)^2))^{1/3}$, $ x = (x_1, x_2) \in \cD$. This function, while in $H^1(\cD)$, is not in $H^{1+\epsilon}(\cD)$ due to its discontinuity (cf.\ \cite[page~1]{G85}).
	With $q$ as the second polynomial kernel of Example~\ref{ex:polynomial}, $\sigma^2=10$, $f(u) = 10^{-1}$, $g(u) = 1$, $b(x) = 0$, and Neumann boundary conditions for $A$, we find $r_1 + 1 \epel 1$ and $r_2 \epel 2$. Hence, the error stemming from the initial condition dominates the noise discretization error. We can therefore set $h' = h^{1/2}$ and retain the same convergence rate as $h' = h$, which is seen in Figure~\ref{subfig:diffgrid}. Thus, for rough initial conditions, the computational effort can be further decreased without any essential loss of accuracy. Similarly, if we had a smooth initial condition with Dirichlet boundary conditions for $A$, we could set $h' = h^{3/4}$ for analogous, albeit smaller, computational savings. This underscores the importance of noise discretization error analysis for efficient algorithm design.
\end{example}

\section*{Acknowledgments}
We would like to express many thanks to two anonymous referees who helped improve the results and presentation. 

\bibliographystyle{siamplain}
\bibliography{interpolation-spde}

\appendix

\section{A fractional Sobolev norm error bound for piecewise linear finite element interpolants}
\label{sec:appendix}

Here we derive the bound $\norm[W^{r,p}]{(I - I_h)v} \lesssim h^{\min(s-r,2)} \norm[W^{s,p}]{v}$ of Proposition~\ref{prop:interpolation}, generalizing the results of~\cite{BB01} to $p \neq 2$ and $s < 1$. For the Scott--Zhang interpolant, \cite{C13} treats this case, but as far as we know Proposition~\ref{prop:interpolation} cannot be inferred from this. 
\begin{proof}[Proof of Proposition~\ref{prop:interpolation}]
	In this proof, we use for general $\cD$ the notation 
	\begin{equation*}
		|u|_{W^{r,p}(\cD)}^{p} := \sum_{|\alpha| = m} \int_{\cD \times \cD} \frac{|D^\alpha u(x) - D^\alpha u(y)|^p}{|x-y|^{d+p\sigma}}\dd x \dd y \quad r = m + \sigma, m \in \N_0, \sigma \in (0,1).
	\end{equation*}
	Here $|\cdot|_{W^{r,p}(\cD)}^{p}$, $r \in \N$, is the usual Sobolev seminorm and the $L^p(\cD)$-norm for $r=0$.
	
	Without loss of generality, let $s \le 2$. We start with the cases $s \in (1,2]$ and $r = 0$ or $r \in (1,1+1/p) \cap(1,s]$. The claim for $s \in (1,2]$, $r \in [0,1]$ then follows from real interpolation, see, e.g., \cite[Theorem~3.1, Remark~3.3]{BH21}. 
	First note that for all $k \in \N_0, \lambda \in (0,1)$, there is a $C < \infty$ such that for all $w \in W^{k+\lambda,p}$,
	\begin{equation}
		\label{eq:thm:interpolation:1}
		|w|_{W^{k+\lambda,p}(\cD)}^p \le C \sum_{|\alpha|=k} \sum_{T \in \cT_h} \left( |D^\alpha w|^p_{W^{\lambda,p}(T)} + \int_T \frac{|D^\alpha w (x)|^p}{(\inf_{y \in \partial T} |x-y|)^{p\lambda}} \dd x \right). 
	\end{equation}
	This is \cite[Lemma~2.1]{BB01} for $p=2$, the proof is analogous for $p \neq 2$. We also need an estimate on the reference simplex $\hat{T}$ with vertices $\hat x_0 = (0,0,\ldots,0), \hat x_1 = (1,\ldots,0)$, \ldots $\hat x_{d} = (0,\ldots,d)$. For $\lambda \in [0,1/p)$, \cite[Theorem~1.4.4.3]{G85} yields a $C < \infty$ such that
	\begin{equation}
		\label{eq:thm:interpolation:4}
		\int_{\hat T} \frac{|w(x)|^p}{(\inf_{y \in \partial \hat T} |x-y|)^{p\lambda}} \dd x \le C \norm[W^{\lambda,p}(\hat{T})]{w}^p, \qquad w \in W^{\lambda,p}(\hat{T}).
	\end{equation}
	For each $T \in \cT_h$, there is an affine transformation $\hat{T} \ni \hat{x} \mapsto B_T x  + b_T \in T$. By regularity, there are $C_1, C_2 \in (0,\infty)$, that only depend on $\cD$ (see \cite{BB01,BS08,DS80}), such that
	\begin{equation}
		\label{eq:thm:interpolation:2}
		C_1 \mathrm{diam}(T) |\hat x| \le |B_T \hat x| \le C_2 \mathrm{diam}(T) |\hat x| \le C_2 h |\hat x|, \quad \hat x \in \hat{T}, T \in \cT_h \text{ and }
	\end{equation}
	\begin{equation}
		\label{eq:thm:interpolation:3}
		C_1 \mathrm{diam}(T)^d \le |\det B_T| \le C_2 \mathrm{diam}(T)^d \le C_2 h^d, \quad T \in \cT_h.
	\end{equation}
	For quasiuniform $\cT_h$, we also have corresponding lower bounds in terms of $h$.
	
	Writing $v_h := I_h v$, $\eqref{eq:thm:interpolation:1}$ yields $|v-v_h|_{W^{r,p}(\cD)}^p \lesssim \sum_{T \in \cT_h} \mathrm{I}_T + \mathrm{II}_T$ for $r > 1$, with  
	\begin{equation*}
		\mathrm{I}_T := |v-v_h|^p_{W^{r,p}(T)} \text{ and } \mathrm{II}_T := \sum_{|\alpha|=1} \int_T \frac{|D^\alpha v (x)-D^\alpha v_h (x)|^p}{(\inf_{y \in \partial T} |x-y|)^{p(r-1)}} \dd x.
	\end{equation*}
	In the case that $r = 0$, $\mathrm{II}_T := 0$. 
	Let us write $e_h := v - v_h$, $\hat v : = v(B_T \cdot + b_T)$ and note that $\nabla \hat v_h = B_T^* \nabla v_h$, cf.\ \cite[page~461]{DS80}. Combining this, \eqref{eq:thm:interpolation:2}, \eqref{eq:thm:interpolation:3} and equivalence of norms in $\R^d$, it follows by a scaling argument that for $r > 1$
	\begin{align*}
		\mathrm{I}_T &= |\det B_T|^2 \int_{\hat{T}} \int_{\hat{T}} \frac{\sum_{|\alpha|=1}\left|D^\alpha e_h(B_T \hat x + b_T) - D^\alpha e_h(B_T \hat y + b_T)\right|^p}{|B_T(\hat x-\hat y)|^{d + (r-1)p}} \dd \hat{x} \dd \hat{y} \\
		&\lesssim |\det B_T|^2 \int_{\hat{T}} \int_{\hat{T}} \frac{ \left|(B_T^*)^{-1}\left(\nabla \hat e_h(\hat x) - \nabla \hat e_h(\hat y)\right)\right|^p }{|B_T(\hat x-\hat y)|^{d + (r-1)p}} \dd \hat{x} \dd \hat{y} \lesssim \mathrm{diam}(T)^{d-rp} |\hat{e}_h|^p_{W^{r,p}(\hat T)}.
	\end{align*}
	For $r = 0$, $\mathrm{I}_T$ is instead bounded by $|\det B_T| \norm[L^p(\hat{T})]{\hat{e}_h}^p \lesssim \mathrm{diam}(T)^d \norm[L^p(\hat{T})]{\hat{e}_h}^p$. 
	In case $r > 1$, we also use~\eqref{eq:thm:interpolation:4} with $\lambda = r-1 < 1/p$ to similarly deduce the estimate
	\begin{align*}
		\mathrm{II}_T \lesssim \mathrm{diam}(T)^{d-rp} \Big(|\hat{e}_h|^p_{W^{1,p}(\hat T)}+|\hat{e}_h|^p_{W^{r,p}(\hat T)}\Big) \lesssim \mathrm{diam}(T)^{d-rp} \norm[W^{r,p}(\hat T)]{\hat{e}_h}^p. 
	\end{align*}
	
	Let $I_h^{\hat T}$ denote interpolation onto the set $\cP_1(\hat T)$ of linear polynomials on $\hat T$ and note that $\hat{v}_h = I_h^{\hat T} \hat{v}$. The Sobolev embedding $W^{s,p}(\hat T) \hookrightarrow L^\infty(\hat T)$ (see \cite[Theorem~1.4.4.1]{G85}) yields $|\hat{v}_h|^p_{W^{1,p}(\hat T)} \lesssim \norm[W^{s,p}(\hat T)]{\hat{v}}^p$ after using equivalence of norms in $\R^d$ to see that
	\begin{equation}
		\label{eq:thm:interpolation:5}
		|\hat{v}_h|^p_{W^{1,p}(\hat T)} \lesssim 
		\norm[L^p(\hat T)]{\nabla \hat v_h}^p 
		=(d!)^{-1} \Big(\sum_{j=1}^d |\hat{v}(\hat{x}_0)-\hat{v}(\hat{x}_j)|^2\Big)^{p/2}.
	\end{equation}
	Similarly, $\norm[L^p(\hat T)]{\hat{v}_h} \lesssim \norm[W^{s,p}(\hat T)]{\hat{v}}$.
	Since $|f|_{W^{r,p}(\hat{T})} = 0$ for $r > 1$ and $f \in \cP_1(\hat{T})$, this gives $I_h^{\hat T} \in \cL(W^{s,p}(\hat T),W^{r,p}(\hat T))$ by real interpolation. These observations, the invariance of $I_h^{\hat T}$ on $\cP_1(\hat{T})$ and the Bramble--Hilbert Lemma \cite[Theorem~6.1]{DS80} imply that
	\begin{align*}
		\norm[W^{r,p}(\cD)]{v-v_h}^p &\lesssim \sum_{T \in \cT_h} \mathrm{diam}(T)^{d-rp} \inf_{f \in \cP_1} \norm[W^{s,p}(\hat{T})]{\hat{v} - f}^p \\
		&\lesssim \sum_{T \in \cT_h} \mathrm{diam}(T)^{d-rp} |\hat{v}|_{W^{s,p}(\hat{T})}^p.
	\end{align*}
	This sum is bounded by $h^{(s-r)p}|v|^p_{W^{s,p}(\cD)}$ after applying this estimate to the summand:
	\begin{align*}
		&|\det B_T^{-1}|^2 \int_{T} \int_{T} \frac{\sum_{|\alpha|=1}\left|D^\alpha \hat v(B_T^{-1} (x - b_T)) - D^\alpha \hat v(B_T^{-1} (y - b_T))\right|^p}{|B_T^{-1}( x- y)|^{d + (s-1)p}} \dd x \dd y \\ 
		&\quad\lesssim |\det B_T^{-1}|^2 \int_{T} \int_{T} \frac{\left|B_T^*(\nabla v(x) - \nabla v(y))\right|^p}{|B_T^{-1}( x- y)|^{d + (s-1)p}} \dd x \dd y \lesssim \mathrm{diam}(T)^{sp-d}  |v|^p_{W^{s,p}(T)}.
	\end{align*} 
	
	The proof for $s = 1$ follows from interpolation of classical estimates, see~\cite[Remark~4.4.27]{BS08}. The proof for $s < 1$ and $r=0$ is similar to the case $r=0$ and $s > 1$, except we apply~\eqref{eq:thm:interpolation:1} with $k = 0$ and bound the infimum over $\cP_1(\hat T)$ when applying \cite[Theorem~6.1]{DS80} with the infimum over $\cP_0(\hat T)$, the space of constants on $\hat T$. 
	
	For $0 < r = s < 1$, we cannot invoke \eqref{eq:thm:interpolation:4}. Instead, scaling and \eqref{eq:thm:interpolation:5} imply that
	\begin{align*}
		\seminorm[W^{1,p}(\cD)]{v_h}^p &\lesssim \sum_{T \in \cT_h} |\det B_T| |(B_T^*)^{-1}|^p \seminorm[L_p(\hat T)]{\nabla \hat v_h}^p \\
		&\lesssim \sum_{T \in \cT_h} |\det B_T| |(B_T^*)^{-1}|^p \seminorm[W^{s,p}(\hat T)]{\hat v}^p \\
		&\lesssim \sum_{T \in \cT_h} |\det B_T|^{-1} |(B_T^*)^{-1}|^p |B_T^{-1}|^{-d-sp} \seminorm[W^{s,p}(T)]{v}^p \lesssim h^{(s-1)p} \seminorm[W^{s,p}(\cD)]{v},
	\end{align*}
	using quasiuniformity in the last step. From this we obtain $\norm[W^{1,p}]{e_h} \lesssim h^{(s-1)} \norm[W^{s,p}]{v}$. Interpolating between this estimate and $\norm[L^p]{e_h} \lesssim h^{s} \norm[W^{s,p}]{v}$ completes the proof.			
\end{proof}	

\section{A computational comparison of two noise generation methods in finite element approximation of an SPDE}

In this supplementary material, we provide a detailed computational comparison of two methods for noise simulation within the finite element method framework: the circulant embedding method, which necessitates noise interpolation, and a truncated numerical eigenapproximation, which does not. Since the latter is not feasibly implementable in spatial dimension greater than $d=1$ on a laptop computer, we restrict ourselves to this case. 

Noise interpolation avoids the need to compute integrals involving the incremental covariance kernel $q$. If $q$ is stationary, the circulant embedding method can be applied to sample noise interpolated on a uniform grid exactly. If, in addition, $q$ is a Mat\'ern kernel, it is known that the noise simulation cost at each time step is $\cO(\log(N_h)N_h)$, where $N_h$ is the number of nodes in the finite element mesh \cite{GKNSS18}. We therefore expect the first of the considered methods to be competitive when the noise is of Mat\'ern type.

The simulation study is performed on a linear stochastic heat equation on $\cD = (0,1)$. We consider additive noise, zero initial value and $q$ of Matérn type (Example~\ref{ex:matern}) with parameters $\nu \in (0,1)$ and $\sigma = \rho = 1$. We let the elliptic operator $A$ be given by $a(1-\Delta)$, where $\Delta$ is the Laplacian with Neumann boundary conditions and $a=0.05$. The methods are implemented in MATLAB R2023b.

\subsection{An approximation employing the circulant embedding method}

The first SPDE approximation we consider is the one analyzed in the main paper, i.e.~\eqref{eq:spde-approximation}. In this one-dimensional setting, there is no need to consider two domains and two finite element meshes. Instead, we take $\cT_{h'} = \cT_h$ to be a uniform discretization of $[0,1]$ with mesh size $h$. Then, the number of nodes $N_h$ is equal to $h^{-1} + 1 \simeq h^{-1}$. The interpolated noise $I_h \Delta W^j$ at time step $t_j$ is sampled by the circulant embedding method, for the implementation of which we refer to \cite{GKNSS18}. From Example~\ref{ex:matern}, we see that in this context the noise discretization error dominates and we should see a spatial convergence rate in Theorem~\ref{thm:main} of $1/2 + \nu - \epsilon$ for arbitrary $\epsilon > 0$. This is confirmed in Figure~\ref{fig:strong-error-ce-vs-eig}, where we have plotted Monte Carlo approximations of the strong error for fixed time step $\Delta t = 2^{-11}$, decreasing $h$ and $\nu \in \{0.3,0.5,0.7\}$, with reference solutions computed with $h=2^{-12}$ used in place of the mild solutions and the same realization of $W$ used across all values of $h$. To implement the circulant embedding method, one must first embed the covariance matrix $\bar Q$ with entries $q(x_i-x_j)$ into a circulant matrix, which can be sampled from using FFT, provided it is positive semidefinite. If it is not, it has to be extended until positive semidefiniteness is obtained. This extension grows with the smoothness parameter $\nu$ and the correlation length $\rho$. In our case, $\rho$ is very big (equal to the size of the spatial domain) but only for $\nu= 0.7$ is the cost of extending the circulant matrix non-negligible, see Figure~\ref{fig:offline-ce-vs-eig}. We have here extended the circulant matrix one row/column at a time until positive definiteness was reached. A more efficient search algorithm would decrease this cost. We note that this cost is \textit{offline}, meaning it only has to be done once for a fixed $h$, irrespective of how small we make $\Delta t$ or how many Monte Carlo samples of the SPDE approximation path we take in any subsequent step. Asymptotically, the \textit{online} cost of sampling from the extended matrix at each timestep is $\cO(\log(N_h)N_h)$. In the range of $h$ we consider, this cost was not achieved in our implementation. Instead, the cost remains relatively constant across all $h$, see Figure~\ref{fig:online-ce-vs-eig}. The cost was computed for a fixed time step $\Delta t = 2^{-9}$ using an average of times over $10$ samples of $X_{h,\Delta t}$.

\begin{figure}[ht]
	\centering
	\includegraphics[height=0.31\textheight]{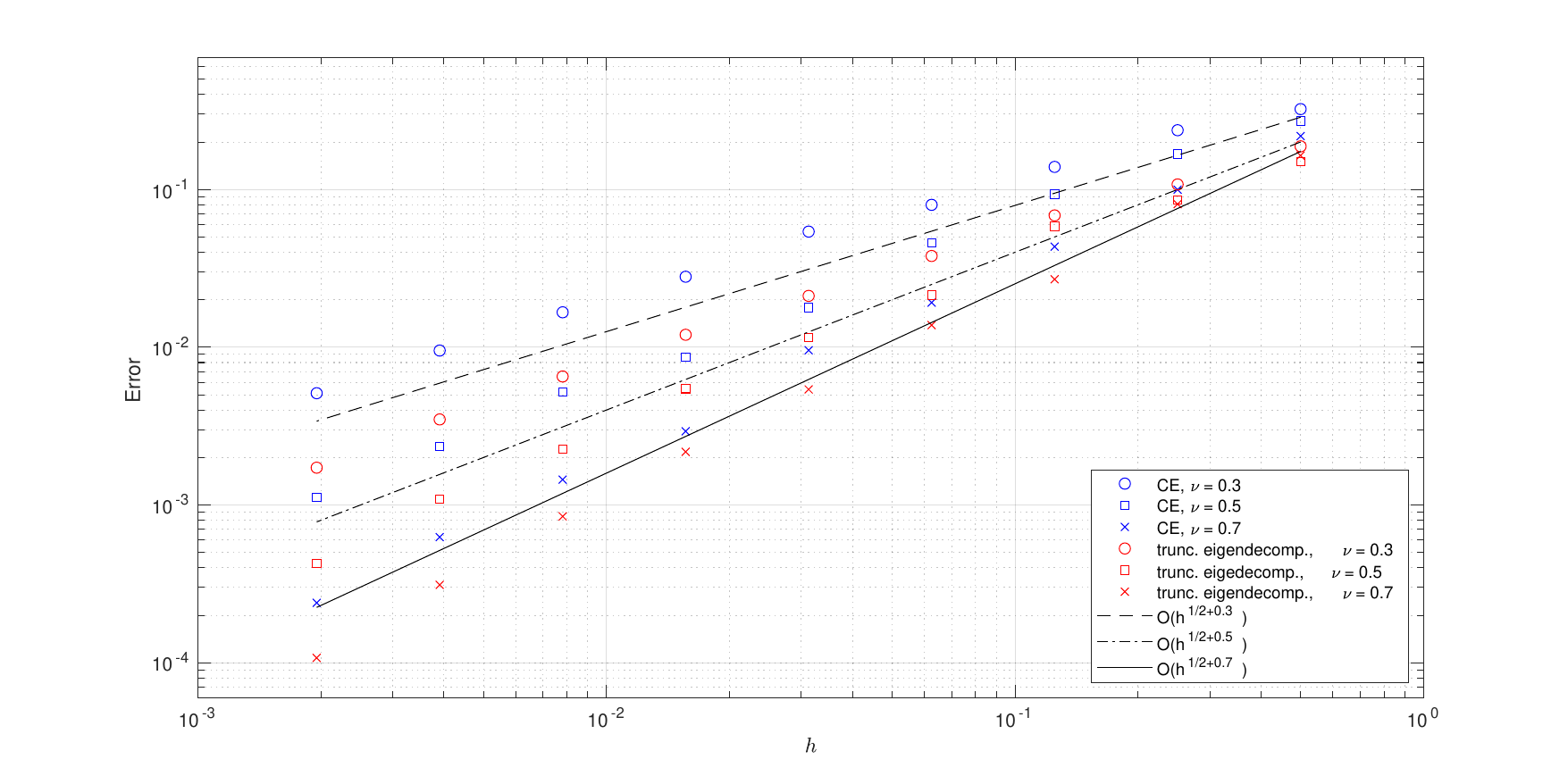}
	\centering
	\caption{Approximate strong errors for circulant embedding approximation and eigentruncation approximation. For both methods we have used a reference solution with $\Delta t = 2^{-11}, h = 2^{-12}$ and a total of $N = 10$ Monte Carlo samples, with the same realization of $W$ used for every $h$.}
	\label{fig:strong-error-ce-vs-eig}
\end{figure}

\begin{figure}[ht]
	\centering
	\includegraphics[height=0.31\textheight]{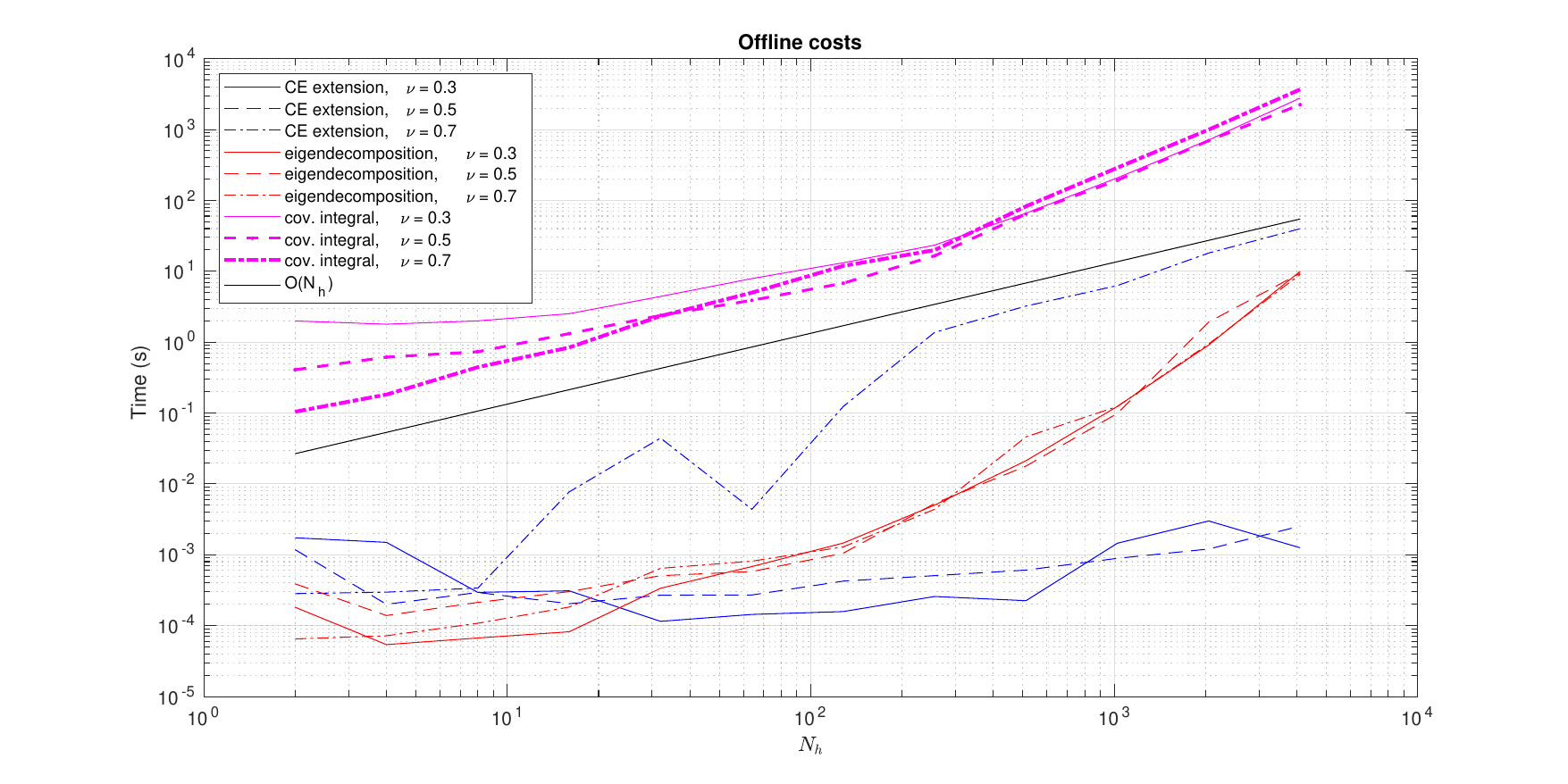}
	\caption{Offline costs for computing the covariance integrals~\eqref{eq:covariance-integrals}, the eigendecomposition and the extension for the circulant embedding matrix.}
	\label{fig:offline-ce-vs-eig}
\end{figure}

\begin{figure}[ht]
	\centering
	\includegraphics[height=0.31\textheight]{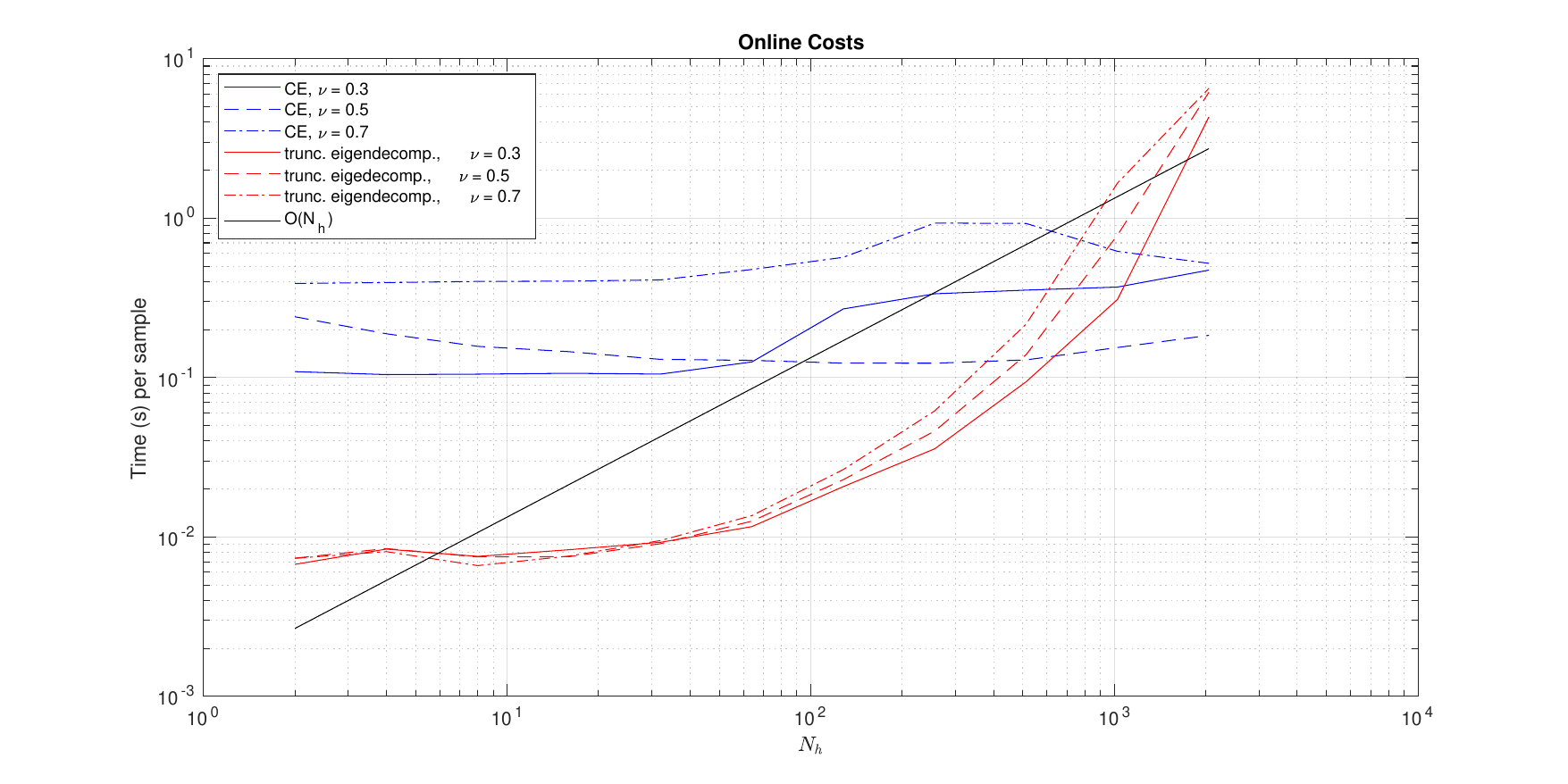}
	\caption{Average cost for obtaining one path of $X_{h,\Delta t}$ for fixed $\Delta t = 2^{-9}$ for the eigendecomposition method and the circulant embedding method.}
	\label{fig:online-ce-vs-eig}
\end{figure}

\subsection{An approximation employing a truncated numerical eigenapproximation}

The second method we consider starts by considering the same SPDE approximation as before, but without noise interpolation:
\begin{equation*}
	X_{h,\Delta t}^j - X_{h,\Delta t}^{j-1} + \Delta t A_h X_{h,\Delta t}^j = \dot{P}_h \Delta W^j, \quad j = 1, 2, \ldots 
\end{equation*}
The only computational difference is that we must sample from the covariance matrix $\tilde Q$ associated with the projected noise $\dot{P}_h \Delta W^j$, which has entries 
\begin{equation}
	\label{eq:covariance-integrals}
	\int_{\cD} \int_{\cD} q(x-y) \phi_i(x) \phi_j(y) \dd x \dd y, \quad i,j = 1, \ldots, N_h,
\end{equation}
where $\phi_i$ is the hat function associated with node $x_i$. Computing these $N_h (N_h + 1)/2$ covariance integrals is very costly when using the MATLAB function \textit{integral2}, and it is the limiting factor in this numerical experiment, see Figure~\ref{fig:offline-ce-vs-eig}. We note that we have not made an attempt to exploit the stationarity of $q$ here, which might be possible in this uniform grid setting and might decrease this cost. To sample from $\tilde Q$, we also need to compute its eigendecomposition. This is known to be of cost $\cO(N_h^3)$. This cost was not achieved in our implementation, but it is clear that the cost grows much faster than $\cO(N_h)$, see Figure~\ref{fig:offline-ce-vs-eig}, and would eventually overtake the offline cost for the circulant embedding method, even in the case that $\nu = 0.7$. From Example~\ref{ex:matern} and the proof of Theorem~\ref{thm:main} (ignoring the noise interpolation part) it can be seen that the spatial convergence rate is $1 + \nu - \epsilon$ for arbitrary $\epsilon > 0$. To compare apples with apples, therefore, we \textit{truncate} the eigenexpansion of $\dot{P}_h \Delta W^j$ at an $N \in \N$ to introduce a noise discretization error and reduce the asymptotic online computational cost per time step from $\cO(N_h^2)$ to $\cO(N N_h)$. To find out which $N$ to choose to obtain a spatial convergence rate of $1/2 + \nu - \epsilon$, one could in principle make use of the semidiscrete analysis of \cite{KLL10}. The problem is that this analysis does not take into account the smoothing of the semigroup approximation, which leads to non-sharp results. Instead, we \textit{conjecture} that if we choose $N = N_h^{\frac{1/2 + \nu}{1 + \nu}}$, we would see a spatial convergence rate of $1/2 + \nu  - \epsilon$. Based on the strong errors in Figure~\ref{fig:strong-error-ce-vs-eig}, where we have used this value of $N$, we believe this conjecture to be true. The asymptotic online computational cost per time step is then $\cO(N_h^{\frac{3/2 + 2\nu}{1 + \nu}})$, i.e., approximately $\cO(N_h^{1.62})$ for $\nu = 0.3$ and $\cO(N_h^{1.71})$ for $\nu = 0.7$. The observed costs are shown in Figure~\ref{fig:online-ce-vs-eig}.

\subsection{Conclusion}

These computational experiments indicate that the two methods converge with the same rate in space. But even in the given setting of additive noise in $d=1$, the offline costs for the truncated eigendecomposition method using projected noise completely dominate the offline costs for the circulant embedding method using interpolated noise. This is mainly due to the cost of computing the integrals $$\int_{\cD} \int_{\cD} q(x-y) \phi_i(x) \phi_j(y) \dd x \dd y.$$
Even if one disregards the cost of computing the integrals, the offline cost of computing the eigendecomposition dominates the offline cost of extending the circulant matrix for $\nu=0.3, 0.5$ and will eventually dominate for $\nu= 0.7$. And even if one disregards the offline costs, the online costs are substantially lower for the circulant embedding method for small $h$. These will dominate as $\Delta t \to 0$ and the number of Monte Carlo samples tend to $\infty$.

This is not a complete analysis of the advantages and disadvantages of the two methods.
First, sharp theoretical results in the spirit of~\cite{KLL10} for this spectral method are needed for rigorous comparisons. These are, as far as we know, not available in the literature. Second, there are several questions that need to be answered for a fair comparison. What is the offline cost for the circulant embedding method if a more efficient search algorithm is used? Can the computation of the integrals be sped up if stationarity of $q$ is taken into account? Finally, we think a fruitful future approach is to consider interpolated noise (so no integrals involving $q$ have to be computed) and then approximate and truncate the eigenbasis of the covariance matrix with entries $q(x_k,x_\ell)$ for finite element nodes $x_k,x_\ell$. This additional approximation to interpolated noise will be analyzed in future work.

\end{document}